\def\Mfin{M_{F,\mathrm{fin}}}
\def\Minf{M_{F,\mathrm{inf}}}
\theoremstyle{plain}
\newtheorem{thm}{Theorem}[section]
\newtheorem{lemma}[thm]{Lemma}
\newtheorem{problem}[thm]{Problem}
\newtheorem{dfn}[thm]{Definition}
\theoremstyle{remark}
\newtheorem{rem}[thm]{Remark}
\begin{document}
\date{\today}

\title{Two variable polynomial congruences and capacity theory }

\author[Chinburg]{T. Chinburg}
\address{T. Chinburg, Dept. of Mathematics\\ Univ. of Pennsylvania \\ Philadelphia, PA 19104, USA}
\email{ted@math.upenn.edu}
\thanks{T. C. was partially supported by  NSF SaTC grants No. 1513671 and 1701785.}

\author[Hemenway Falk]{B. Hemenway Falk}
\address{B. Hemenway Falk, Dept. of Computer Science\\Univ. of Pennsylvania \\ Philadelphia, PA 19104, USA}\email{fbrett@cis.upenn.edu}
\thanks{B. H-F. was supported by NSF SaTC grant No. 1513671.}

\author[Heninger]{N. Heninger}
\address{N. Heninger, Dept. of Computer Science and Engineering\\
U. C. S. D.\\
La Jolla, CA 92093}
\email{nadia@cs.ucsd.edu}
\thanks{N. H. was supported by NSF SaTC grants No. 1513671 and 1651344.}

\author[Scherr]{Z. Scherr}
\address{Z. Scherr, Dept. of Math and Computer Science\\Susquehanna University\\
Selinsgrove, PA 17870}
\email{scherr@susqu.edu}
\thanks{Z. Scherr was partially supported by NSF SaTC grant  No. 1513671}

\maketitle


\section{Introduction}
\label{s:intro}

Coppersmith's method~\cite{JC:Coppersmith97} uses lattice basis reduction to find small solutions of polynomial congruences.  This method and its variants have been used to solve a number of problems across cryptography, including attacks against low public exponent RSA~\cite{JC:Coppersmith97}, demonstrating the insecurity of small private exponent RSA~\cite{EC:BonDur99}, factoring with partial knowledge~\cite{JC:Coppersmith97}, and the approximate integer common divisor problem~\cite{HowgraveGraham01,EC:vGHV10,CohnHeninger13}.

This paper is the second in a series relating Coppersmith's method to adelic capacity theory.  In the most common approach to Coppersmith's method, which is the perspective we adopt in this paper, one constructs an auxiliary polynomial that is guaranteed by construction to have the desired solutions as roots.  Using adelic capacity theory, we showed in our first paper that in the univariate case, Coppersmith's constructive bounds are tight: Above the bound, no auxiliary polynomial of the form constructed in the algorithm can exist.

Coppersmith's method can also be applied to find solutions to multivariate polynomials or systems of polynomials.  Unlike the univariate case, which is a fully rigorous method, the method used in the existing cryptanalytic literature to address the multivariate case is heuristic.  In order to solve an $m$-variable system, one searches for $m$ (or more) suitable auxiliary polynomials in an explicitly constructed lattice, and then solves the system of auxiliary polynomials to find the possible roots.  In order for this method to work, one needs to find $m$ suitable algebraically independent polynomials constructed through the lattice.  The existing constructions are unable to guarantee the algebraic independence of multiple auxiliary polynomials, and thus the applications of this method all rely on a heuristic assumption of algebraic independence.

In this paper, we apply adelic capacity theory to two-variable linear polynomial congruences. This is the simplest case involving multivariate polynomials, and it includes the hidden number problem and ring learning with errors as special cases.  The analysis turns out to already be quite involved, and we cannot apply existing results from adelic capacity theory in a black-box way.

It is always possible to find at least one auxiliary function that is linear from the construction in Coppersmith's method.  We show that this function can be used to determine rigorously whether Coppersmith's method can succeed.    That is, we show that one can use capacity theory  to determine from the first auxiliary function whether there will be a second function that is algebraically independent of the first.   This is because the zero locus of the first function is an affine line, to which one can apply the work on capacity theory by Cantor \cite{Cantor} and Rumely \cite{Rumely}.  As a consequence of this approach, we will show that the heuristic assumption of algebraic independence does not hold in general for all problem instances. In particular, we give an infinite family of examples for which there can be no pair of  algebraically independent functions of any degree in Coppersmith's method. However, we have a method for determining rigorously whether such a pair exists in a given case.  We also give an infinite family of examples for which such a pair does exist.

If one is looking for small integral solutions of linear polynomial congruences in a particular number field, one can apply lattice techniques directly, without constructing auxiliary functions.  Coppersmith's method pertains to finding all such solutions in all number fields, i.e. in the ring of all algebraic integers.   In the case of homogenous congruences, one solution produces infinitely many by multiplying all the variables by an arbitrary root of unity.  Thus in this case, there are either infinitely many solutions in the ring of all algebraic integers, or no solutions at all.  For this reason, if
one can use capacity theoretic arguments to show that there are only finitely many solutions, one knows that in fact there are no solutions at all.  This leads  in \S \ref{s:solutioncounting}  and \S \ref{s:bounds}
to strong bounds on the number of solutions of inhomogeneous congruences as well.  In particular, we show in \S \ref{s:bounds} how this approach leads to a computable sufficient criterion for  there to exist at most one solution in any number field to a hidden number problem involving two linear congruences.  

Our methods amount to giving effective
upper and lower bounds to various finite morphism capacities in multivariable capacity theory (see \cite{FiniteMorphism}).
This is the first time to our knowledge that multivariable capacity theory has been applied to cryptography.
For a discussion of how one variable capacity theory pertains to Coppersmith's method, see \cite{AC:CHHS16}.

\subsection{The Hidden Number Problem and Ring Learning with Errors}
\label{s:hiddennumber}

In cryptographic applications, the hidden number problem is defined over the integers as follows.  In the usual formulation, there is a public integer modulus $n$ and a secret integer $s$.  For a given non-negative integer $c_i$ less than $n$, one can compute the remainder $b_i = c_i s \bmod n$ as a positive integer less than $n$.  Let $d_i$ be the integer defined by the $\ell$ most significant bits of $b_i$, and let $x_i=b_i-d_i$.  In the hidden number problem, one is given many samples $\{ (c_i, d_i) \}_{i=0}^m$ and the problem is to compute the secret integer $s$ mod $n$ from these samples.

To put this problem into the framework we consider in this paper, note that each sample satisfies the linear relation
\[
x_i - c_i s + d_i \equiv 0 \bmod n
\]
For each relation, the $x_i$ are unknown and small, and the value $s$ is unknown.  Suppose $c_0$ is relatively prime to $n$, so that $c_0 c'_0 \equiv 1$ mod $n$ for a readily computable integer $c'_0$.  The above congruence for $i = 0$ then gives
\begin{equation}
\label{eq:seq}
s \equiv c'_0 (x_0 + d_0)   \bmod n
\end{equation}
Substituting this into the congruences for $i = 1, \ldots , m$ then gives a new system of congruences
\begin{equation}
\label{eq:newsystem}
x_i + t_i x_0 + a_i \equiv 0 \bmod n\quad \mathrm{for}\quad 1 \le i \le m
\end{equation}
in small unknowns $x_i$ and $x_0$, where $t_i = -c_i  c'_0$ and $a_i = d_i - c_i c'_0 d_0$ are computable from the given data.  Because of (\ref{eq:seq}), we can reformulate the problem of finding $s$ mod $n$ as finding a solution $\{x_i\}_{i = 0}^m$ to the system of congruences (\ref{eq:newsystem}) with appropriate size bounds on all of the $x_i$.

The ``usual'' method used to solve this problem comes from Boneh and Venkatesan~\cite{C:BonVen96}, and consists of solving a closest vector problem where the solution vector corresponds to the desired solution to the problem.  In this paper we consider a dual construction, corresponding to Howgrave-Graham's reformulation of Coppersmith's method~\cite{HowgraveGraham97,Coppersmith01}:  Using lattice methods, we try to construct polynomials in the variables $\{x_i\}_{i = 0}^m$ which must vanish on all solutions, and whose common zero locus is finite.

Boneh and Venkatesan give bounds for which with high probability there is a unique solution when the $t_i$ are generated uniformly at random modulo $n$.  In practical applications of this method, one is dealing with fixed parameters.  In these cases one can empirically measure the probability of success~\cite{EC:AlbHen21}, but a rigorous analysis of the number of possible solutions has not been done in the literature.

In the ring learning with errors problem~\cite{EC:LyuPeiReg10}, one has a public commutative ring $R$, typically an order in the ring of integers $O_F$ of a number field $F$, and a secret $s \in R$.  The input to the problem is a set of samples $\{(a_i,b_i)\}_{i = 1}^n$ of pairs of elements of $R$ for which $b_i$ is congruent to $a_i \cdot s + e_i$ modulo a given ideal $\mathcal{I} \subset R$, where 
$e_i \in R$ is an unknown error that is small in some sense.  Typically the $e_i$ must be ``short"
relative to the complex embeddings of $R$.

\subsection{Solution Counting and Capacity Theory}
\label{s:solutioncounting}

The problem that we consider in this paper unifies both of the above problems, but we limit ourselves to the case of two samples.   As noted above, one can eliminate the unknown secret $s$ and obtain a single two-variable linear polynomial where the desired solution for both variables is ``small''.  For a given solution, one can then use the original polynomials to solve for a unique $s$ determined by that solution.

One basic question is:  How unique is $s$?  When $n = 2$, will show in \S \ref{s:bounds} that there is at most one $s$ when the capacity associated to an adelic set arising from the lattice construction is less than $1$.  

A peculiarity of this approach is that in the ring-LWE case, we can actually bound the number of solutions in \emph{all} number fields, and not just in a particular $F$.

We will now describe the three related problems we will study.  Let $O_F$ be the ring of integers of a number field $F$.  
Let $\mathcal{J}$ be a non-zero ideal of $O_F$.  We will suppose $a$ and $t$ are elements of $O_F$  such that $\mathcal{J} + t O_F = O_F$, so that $t$ projects to
a unit of $O_F/\mathcal{J}$ if $\mathcal{J} \ne O_F$.   Let $\overline{\mathbb{Z}}$ be the integral closure of $O_F$ in an algebraic closure $\overline{\mathbb{Q}}$ of $\mathbb{Q}$.

\begin{problem}
\label{prob:Integers}    For which real numbers $X, Y > 0$ is there a finite time algorithm for listing all  $x , y \in \overline{\mathbb{Z}}$ such that 
\begin{enumerate}
\item[1.] $x + ty + a \equiv 0$ mod $\mathcal{J} \overline{\mathbb{Z}}$ and
\item[2.] For every ring embedding $\lambda:\overline{\mathbb{Z}} \to \mathbb{C}$, the images $x'$ and $y'$ of $x$ and $y$ 
satisfy $|x'| \le X$ and $|y'| \le Y$.
\end{enumerate}
\end{problem}

\begin{problem}
\label{prob:algint} For which real $X, Y > 0$  are there only finitely many algebraic integers 
$x, y \in \overline{\mathbb{Z}}$ having the properties in Problem \ref{prob:Integers}?
\end{problem}

\begin{problem}
\label{prob:function}  Construct non-zero polynomials $g(x,y) \in F[x,y]$
with the following properties:
\begin{enumerate}
\item[1.] For all $x, y \in \overline{\mathbb{Z}}$ such that $x + ty + a \equiv 0$ mod $\mathcal{J} \overline{\mathbb{Z}}$ the
value $g(x,y)$ lies in $\overline{\mathbb{Z}}$.
\item[2.] Suppose $x', y' \in \mathbb{C}$ and that 
$|x'| \le X$ and $|y'| \le Y$.  Then $|\lambda(g)(x',y')| < 1$ for all embeddings $\lambda:F \to \mathbb{C}$,
where $\lambda(g)(x,y) \in \mathbb{C}[x,y]$ is the image of $g(x,y)$ under the homomorphism $F[x,y] \to \mathbb{C}[x,y]$
induced by $\lambda$.  
\end{enumerate}
\end{problem}

Note that Problem \ref{prob:Integers} is a constrained learning with errors problem with secret $s = y \in \overline{\mathbb{Z}} = R$ when one has a single 
data point $(a_1,b_1) = (t,-a)$ , the ideal $\mathcal{I}$ is $\mathcal{J}\overline{\mathbb{Z}}$ and the error $e_1$ is $-x$.   

Coppersmith's method relates these problems  in the following way.  

Suppose $\{g_i(x,y)\}_i$ is a family
of polynomials which each have the properties in Problem \ref{prob:function}.  Let $(x,y)$ be a solution
of  Problem \ref{prob:algint}. Then $g_i(x,y)$ will be an algebraic integer. Every embedding of $g_i(x,y)$
into $\mathbb{C}$ lies in $\mathbb{R}$ and has the form $\lambda(g_i)(x',y')$ for some conjugates $x' = \lambda(x)$ of $x$
and $y' = \lambda(y)$ of $y$ and some embedding $\lambda:\overline{\mathbb{Z}} \to \mathbb{C}$.  Since $|\lambda(g_i)(x',y')| < 1$ for all such $(x',y')$, the product formula (or an easy norm argument)
shows $g_i(x,y) = 0$.   

Suppose now that  the common zero locus of the family $\{g_i(x,y)\}_i$ is finite.  It follows that there are finitely
many solutions $(x,y) $ to Problem \ref{prob:algint}, and these solutions contain those of Problem 
\ref{prob:Integers}. 
If one has an algorithm for producing a family of $\{g_i(x,y)\}_i$ with all of these properties, as well
as for finding their finite set of common zeros, one has an algorithm for solving 
 Problem \ref{prob:Integers}.  
 
Suppose, to the contrary, that there are infinitely many solutions to Problem \ref{prob:algint}. 
Then the common zero locus of any family  $\{g_i(x,y)\}_i$ of the above kind cannot be finite, and 
Coppersmith's method cannot lead to a finite time algorithm to solve Problem \ref{prob:Integers}.

 We can now state our main result in qualitative terms;  a more quantitative version is given in
 Theorem \ref{thm:mainthm}.  Let $r_1(F)$ and $r_2(F)$ be the 
 number of real and complex places of $F$, and let $D_{F/\mathbb{Q}}$ be the disciminant of $F$.
 
 \begin{thm} 
\label{thm:mainthmearly}Suppose $X > 0$ and $ Y > 1/3$  satisfy the inequality
\begin{equation}
\label{eq:bigineq}
( \pi/ 2)^{3 r_2(F)}  \cdot 3^{-3 [F:\mathbb{Q}]}\cdot  |D_{F/\mathbb{Q}}|^{-3/2} \cdot \mathrm{Norm}_{F/\mathbb{Q}}(\mathcal{J}) > (XY)^{[F:\mathbb{Q}]} 
\end{equation}
There exists  
a non-zero linear function $g_1(x,y) = \tau x + \gamma y + \delta \in F[x,y]$ with the properties in Problem \ref{prob:function}.
Given any such $g_1(x,y)$, one of the following statements is true, and there is a procedure for determining which of the following alternatives hold:
\begin{enumerate}
\item[1.]   Suppose we decrease both $X$ and $Y$ by arbitrarily small amounts.  
Then there is a polynomial $g(x,y) \in F[x,y]$ for which the conditions
in  Problem \ref{prob:function} hold for which the common zero locus of $g(x,y)$
and $g_1(x,y)$ is finite.  Such a $g(x,y)$ leads  to a solution of Problem
\ref{prob:Integers}  for the new values of $X$ and $Y$, and there are only finitely many solutions to Problem \ref{prob:algint} for these values.   
\item[2.] Suppose we increase both $X$ and $Y$ by arbitrarily small amounts.  Then for these new values of $X$ and $Y$, 
all polynomials $g(x,y) \in F[x,y]$
of any degree having the properties in Problem \ref{prob:function} are divisible by $g_1(x,y)$.  There are infinitely
many solutions to Problem \ref{prob:algint} for the new values of $X$ and $Y$, and thus Coppersmith's method
in the above form cannot be used to solve Problem \ref{prob:Integers}.
\end{enumerate}
One of these alternatives must hold, and they are not mutually exclusive.
\end{thm}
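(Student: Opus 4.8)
The plan is to split the argument into two parts: first, constructing the linear polynomial $g_1$ by geometry of numbers, which is what accounts for the inequality \eqref{eq:bigineq}; and second, for a fixed such $g_1$, translating the existence question for a second, algebraically independent auxiliary polynomial into a one-variable adelic capacity problem on the line $Z(g_1)$, to which the results of Cantor and Rumely apply.

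For the first part I would let $L\subset F^3$ be the set of triples $(\tau,\gamma,\delta)$ for which the linear polynomial $\tau x+\gamma y+\delta$ satisfies condition (1) of Problem \ref{prob:function}. Using the congruence $x\equiv-(ty+a)\bmod\mathcal{J}\overline{\mathbb{Z}}$, one checks that $L$ is a full-rank lattice in $(F\otimes\mathbb{R})^3\cong\mathbb{R}^{3[F:\mathbb{Q}]}$ with covolume $\bigl(2^{-r_2(F)}|D_{F/\mathbb{Q}}|^{1/2}\bigr)^{3}/\mathrm{Norm}_{F/\mathbb{Q}}(\mathcal{J})$ in the usual Minkowski embedding. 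Condition (2) of Problem \ref{prob:function} says exactly that $|\lambda(\tau)|X+|\lambda(\gamma)|Y+|\lambda(\delta)|<1$ at every archimedean $\lambda$, since the left-hand side is the supremum of $|\lambda(g)|$ over the polydisc; this cuts out a symmetric convex region which is a product of generalized octahedra at the real places and of their $\mathbb{C}^3$-analogues at the complex places. A volume computation for this region together with Minkowski's convex body theorem then produces a nonzero point of $L$ precisely under \eqref{eq:bigineq}: the powers of $3$ come from the three coordinates $\tau,\gamma,\delta$, and the factor $(\pi/2)^{3r_2(F)}$ from the complex places. A little extra work --- the role of the hypothesis $Y>1/3$; see Theorem \ref{thm:mainthm} --- shows the point can be taken with $\tau$ and $\gamma$ both nonzero, so that $Z(g_1)$ is an honest affine line in which neither coordinate is constant.

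Next I would fix such a $g_1=\tau x+\gamma y+\delta$, set $\ell=Z(g_1)$, and choose a linear parameter $z$ identifying $\ell$ with $\mathbb{A}^1$, so that $x$ and $y$ become affine-linear functions of $z$. The key dictionary is: a $g\in F[x,y]$ satisfying condition (1) of Problem \ref{prob:function} restricts on $\ell$ to a one-variable function $h(z)$ whose poles, at the primes dividing $\mathcal{J}$, and whose integrality elsewhere are controlled by how $x+ty+a$ and the ring $O_F$ sit along $\ell$; condition (2) becomes the requirement that $|h|<1$ on each archimedean slice $\mathbb{E}_\lambda=\{P\in\ell(\mathbb{C}):|x(P)|\le X,\ |y(P)|\le Y\}$, which in the $z$-coordinate is an intersection of two discs; and $g$ fails to be divisible by $g_1$ exactly when $h\not\equiv0$. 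Thus the existence of a $g$ as in alternative~1 is equivalent to a Fekete--Szeg\H{o}-type statement for the adelic set $\mathbb{E}=\prod_v\mathbb{E}_v$ on $\ell$, where for finite $v$ one takes $\mathbb{E}_v$ to be the $v$-integral locus, enlarged at $v\mid\mathcal{J}$ by the amount one may gain by clearing denominators along $\ell$. Conversely, by the product-formula argument already in the excerpt, every solution of Problem \ref{prob:Integers} (for $X,Y$, or for anything smaller) lies on $\ell$ because $g_1$ has the Problem \ref{prob:function} properties, so the solutions on $\ell$ are exactly the algebraic points of $\ell$ in $\mathbb{E}$. It then remains to compute the capacity $\gamma_\infty(\mathbb{E})$ with respect to the point at infinity of $\ell$ --- a product of local contributions, the archimedean ones being transfinite diameters of the lens-shaped sets $\mathbb{E}_\lambda$ --- and to invoke \cite{Cantor,Rumely}: if $\gamma_\infty(\mathbb{E})<1$ (which holds after shrinking $(X,Y)$ whenever $\gamma_\infty(\mathbb{E})\le1$ for the original values) the Fekete direction supplies a nonzero $h$, hence a $g$ with the Problem \ref{prob:function} properties and $Z(g)\cap Z(g_1)$ finite, which with the previous sentence bounds the solution sets of Problems \ref{prob:Integers} and \ref{prob:algint}, giving alternative~1; if $\gamma_\infty(\mathbb{E})>1$ (which holds after enlarging $(X,Y)$ whenever $\gamma_\infty(\mathbb{E})\ge1$) no such $h$ exists, so every $g$ with the Problem \ref{prob:function} properties is divisible by the irreducible $g_1$, and Rumely's theorem furnishes infinitely many algebraic points of $\ell$ in $\mathbb{E}$, i.e. infinitely many solutions of Problem \ref{prob:algint}, giving alternative~2. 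Exactly one of $\gamma_\infty(\mathbb{E})<1$, $=1$, $>1$ holds, and the capacity of a genus-zero adelic set of this explicit shape is computable, which is the promised procedure for deciding between the alternatives; the two alternatives coincide precisely when $\gamma_\infty(\mathbb{E})=1$, which is why they are not mutually exclusive.

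The step I expect to be the main obstacle is the capacity computation together with the precise description of the finite-place sets $\mathbb{E}_v$: one must pin down, at each prime dividing $\mathcal{J}$, exactly how much the $v$-adic ``disc'' on $\ell$ is enlarged by the denominators one is permitted to clear --- equivalently, the order of vanishing of $x+ty+a$ along $\ell$ modulo powers of $\mathcal{J}$ --- evaluate the transfinite diameters of the archimedean lenses in closed form, and verify that $\mathbb{E}$ meets the regularity and stability hypotheses under which \cite{Rumely} applies; assembling these pieces with the Minkowski normalization of the first part is what produces the clean quantitative threshold of Theorem \ref{thm:mainthm}. A secondary difficulty is the boundary bookkeeping --- checking that an arbitrarily small perturbation of $(X,Y)$ in the appropriate direction simultaneously forces the strict inequalities of Problem \ref{prob:function}(2) and a strict capacity inequality --- together with the verification that $g_1$ can indeed be chosen with $\tau,\gamma\ne0$.
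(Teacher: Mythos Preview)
Your proposal is correct and follows essentially the same two-step strategy as the paper: Minkowski's theorem on the lattice $\mathcal{J}^{-1}(x+ty+a)+O_Fy+O_F$ to produce $g_1$, then restriction to the line $Z(g_1)$ parametrized by $y$ and an application of the Cantor--Rumely Fekete--Szeg\H{o} theorems to the resulting adelic set, with the dichotomy governed by whether the capacity is below, equal to, or above $1$ and strict monotonicity in $(X,Y)$ handling the boundary case. Two small discrepancies: the paper uses the inscribed box $|\lambda(\tau)|<1/(3X)$, $|\lambda(\gamma)|<1/(3Y)$, $|\lambda(\delta)|<1/3$ rather than your octahedron, and it is this choice that produces the exact factor $3^{-3[F:\mathbb{Q}]}$ in \eqref{eq:bigineq} (your octahedral region is larger and would yield a better constant); and the paper only proves $\tau=b_1\ne0$ (via $Y>1/3$), not $\gamma\ne0$, which suffices since $y$ alone parametrizes the line.
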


We show in Theorem \ref{thm:mainthm} that the case in which both alternatives (1) and (2) occur is when a certain adelic capacity is exactly equal to $1$.  
We  construct in Theorem \ref{thm:propexamples}  infinitely many examples in which option (1) occurs but (2) does not, and infinitely many other examples in which option (2) occurs but (1) does not.  For these examples, $F = \mathbb{Q}$, $\mathcal{J} = p \mathbb{Z}$ for a prime $p$ of increasing size and $X = Y = c \sqrt{p}$ for a fixed positive constant $c$.  We show that each of options (1) and (2) occur for a positive proportion (as $p \to \infty$) of pairs of $t$ and $a$ 
in $\mathbb{Z}/p$ for which $t$ is prime to $p$.

In \S \ref{s:bounds}, we consider bounds on the number of solutions of Problem \ref{prob:function} when case (1) occurs.  When a capacity associated to $X$ and $Y$ is sufficiently small, we show in Theorem \ref{thm:bouncer}  that there is at most one pair $(x,y)$ with the properties in Problem \ref{prob:Integers}.  This relies on the fact that in the special case when $a = 0$, multiplying both $x$ and $y$ by a root of unity leads to another solution. Therefore when $a = 0$, either one has no solutions or an infinite number.  Because
of this, if one can show there are only finitely many solutions via capacity theory when $a = 0$, there in fact can be no solutions at all.  This fact leads to another phenomenon, namely that when $a = 0$, a small solution to a linear homogeneous congruence prevents the existence of solutions which have uniformly smaller archimedean absolute values.
We state one example here:  a more general result is shown in  Theorem  \ref{thm:surprise}.

\begin{thm}
\label{thm:nonono}  Suppose $n$ is is a positive integer, $\mathcal{J} = n O_F$, $a = 0$  and $XY \le n/2$.   Suppose $(x_0,y_0)$  is a pair of algebraic integers with the properties stated for $x$ and $y$ in Problem \ref{prob:Integers}.  Assume in addition that $x_0, y_0$ and $n$ are coprime in the sense that no pair of these numbers is contained in a proper ideal of $O_F$. Then there is no non-zero pair $(x_1, y_1)$  of algebraic integers having the properties in Problem \ref{prob:Integers} for which the following is true:  
$|\lambda(x_1)| \le  |\lambda(x_0)|$ and $|\lambda(y_1)| \le  |\lambda(y_0)|$ for all embeddings $\lambda:\overline{\mathbb{Z}} \to \mathbb{C}$ with strict inequality holding for at least one of $x$ or $y$ for at least one $\lambda$.
\end{thm}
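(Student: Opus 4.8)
The plan is to argue directly by contradiction, the key device being the ``cross term'' $w := x_0 y_1 - x_1 y_0$; the full machinery of adelic capacity theory is not needed for this special case. Suppose $(x_1,y_1)\neq(0,0)$ is a pair of algebraic integers with the properties in Problem~\ref{prob:Integers} (so with $a=0$) satisfying in addition $|\lambda(x_1)|\le|\lambda(x_0)|$ and $|\lambda(y_1)|\le|\lambda(y_0)|$ for every embedding $\lambda\colon\overline{\mathbb Z}\to\mathbb C$, with strict inequality for at least one coordinate at some $\lambda$. First I would record two facts about $w$. Arithmetically, $x_0+ty_0\equiv 0$ and $x_1+ty_1\equiv 0 \bmod n\overline{\mathbb Z}$, so multiplying the first congruence by $y_1$, the second by $y_0$, and subtracting yields $w\equiv 0\bmod n\overline{\mathbb Z}$, i.e.\ $w/n\in\overline{\mathbb Z}$. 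Archimedeanly, for each $\lambda$ one has $|\lambda(x_0)|\,|\lambda(y_1)|\le XY$ and $|\lambda(x_1)|\,|\lambda(y_0)|\le XY$ (using the size bounds on $(x_0,y_0)$ together with the domination inequalities), so the triangle inequality gives $|\lambda(w)|\le 2XY\le n$.

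Next I would run a product-formula argument. Choose a number field $K$ containing $x_0,y_0,x_1,y_1$; then $w/n\in O_K$, and if $w\neq 0$ the nonzero rational integer $\mathrm{Norm}_{K/\mathbb Q}(w/n)$ has absolute value $\ge 1$, hence $\prod_{\sigma\colon K\to\mathbb C}|\sigma(w)|\ge n^{[K:\mathbb Q]}$. Since each of the $[K:\mathbb Q]$ factors satisfies $|\sigma(w)|\le 2XY\le n$, every inequality in sight must be an equality: in particular $2XY=n$, and for every $\lambda$ one gets $|\lambda(x_0)|\,|\lambda(y_1)|=|\lambda(x_1)|\,|\lambda(y_0)|=XY$, which (given $|\lambda(x_i)|\le X$ and $|\lambda(y_i)|\le Y$) forces $|\lambda(x_0)|=|\lambda(x_1)|=X$ and $|\lambda(y_0)|=|\lambda(y_1)|=Y$ for all $\lambda$ --- contradicting the strictness hypothesis. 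Therefore $w=0$.

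Finally the coprimality hypothesis enters. From $w=0$ we have $x_0 y_1 = x_1 y_0$. Since $x_0$ and $y_0$ are coprime we may write $u x_0 + v y_0 = 1$ with $u,v$ algebraic integers; setting $\mu := u x_1 + v y_1\in\overline{\mathbb Z}$, a one-line manipulation using $x_0 y_1 = x_1 y_0$ gives $x_1 = \mu x_0$ and $y_1 = \mu y_0$. As $(x_1,y_1)\neq(0,0)$ we have $\mu\neq 0$; and applying $\lambda$ to $u x_0 + v y_0 = 1$ shows $\lambda(x_0),\lambda(y_0)$ are not both zero, so from $|\lambda(x_1)|\le|\lambda(x_0)|$ and $|\lambda(y_1)|\le|\lambda(y_0)|$ we obtain $|\lambda(\mu)|\le 1$ for every $\lambda$. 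Thus $\mu$ is a nonzero algebraic integer all of whose conjugates lie in the closed unit disc, so by Kronecker's theorem $\mu$ is a root of unity; then $|\lambda(\mu)|=1$ for all $\lambda$, so $|\lambda(x_1)|=|\lambda(x_0)|$ and $|\lambda(y_1)|=|\lambda(y_0)|$ for all $\lambda$, once again contradicting strictness. This contradiction proves the theorem.

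I expect the only step requiring care to be the product-formula argument: one must pass to a common number field $K$, keep the count of embeddings straight, and --- in the boundary case $XY=n/2$, where $w$ need not a priori vanish --- extract the chain of forced equalities correctly. Everything else is bookkeeping together with Kronecker's theorem, and the three-way coprimality of $x_0,y_0,n$ is used precisely to guarantee that $x_0$ and $y_0$ generate the unit ideal. The same conclusion, reformulated in terms of an associated adelic capacity being at most $1$, is what underlies the more general Theorem~\ref{thm:surprise}.
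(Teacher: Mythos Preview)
Your proof is correct and shares its opening move with the paper: both arguments begin with the cross-term $w = x_0 y_1 - x_1 y_0$ (in the paper's language, this is $-n$ times the auxiliary polynomial $b_1 x + b_2 y = (y_0 x - x_0 y)/\alpha$ evaluated at $(x_1,y_1)$), observe that $w/n$ is an algebraic integer with all archimedean absolute values at most $1$, and use a norm/product-formula argument to force $w=0$.

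Where you diverge is in the endgame. After $w=0$, the paper parametrizes the line $y_0 x - x_0 y = 0$ by $y$, builds the adelic set $\mathcal{E}$ of Definition~\ref{def:definitions} with archimedean radii shrunk by the strict inequality, computes each local capacity explicitly (this is where the pairwise coprimality of $x_0,y_0,n$ is used, to identify $E_v$ as a disc of radius $|y_0|_v$ at finite $v$), and obtains $\gamma(\mathcal{E}) = \prod_{v\in\Minf} r_v^{[F_v:\mathbb{R}]} < 1$; the Fekete--Szeg\H{o} theorem plus the homogeneity trick of Theorem~\ref{thm:bouncer} then give the contradiction. You instead use the coprimality of $x_0,y_0$ directly to write $(x_1,y_1)=\mu(x_0,y_0)$ with $\mu\in\overline{\mathbb Z}$, and finish with Kronecker's theorem. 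Your route is more elementary and self-contained---it avoids capacity theory entirely---while the paper's route is the specialization of its general machinery and is what makes the connection to Theorem~\ref{thm:surprise} transparent. It is worth noting that your Kronecker argument would equally well complete the proof of Theorem~\ref{thm:surprise} once the paper has established $b_1 x + b_2 y = 0$ there, so the capacity computation in that proof could be replaced by your final paragraph.
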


\section{Constructing one auxiliary function}

It is well known that the existence of one function of the kind in Problem \ref{prob:function} for sufficiently small positive values of $X$ and $Y$ is a  consequence of Minkowski's
theorem:

\begin{thm}
\label{thm:lin}
Suppose $X > 0$ and $ Y > 1/3$  satisfy the inequality (\ref{eq:bigineq}). 
There exists a a polynomial $g_1(x,y) = b_1 x + b_2 y  + b_3 \in \mathcal{J}^{-1}\cdot O_F[x,y]$ with the following properties:
\begin{enumerate}
\item[i.] For all embeddings $\lambda:F \to \mathbb{C}$ one has 
$$|\lambda(b_1)| < 1/(3X), \quad  |\lambda(b_2)| < 1/(3Y), \quad \mathrm{and}\quad  |\lambda(b_3)| < 1/3.$$ 
\item[ii.] 
$g_1(x,y) = 0$ for all pairs algebraic integers $(x,y)$ as
in Problem \ref{prob:algint}. 
\item[iii.]  $b_1 \ne 0$ and $g_1(x,y) \equiv b_1(x + ty + a)$ mod $O_F[x,y]$
\end{enumerate}
All such $g_1(x,y)$  have the properties in Problem \ref{prob:function}.
\end{thm}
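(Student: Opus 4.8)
The plan is to obtain the coefficients $(b_1,b_2,b_3)$ from Minkowski's convex body theorem applied to a lattice and a box tailored to conditions (i) and (iii), and then to deduce (ii) and the final assertion by the product (norm) argument already sketched in the introduction. Condition (iii) amounts to $b_2 - t b_1 \in O_F$ and $b_3 - a b_1 \in O_F$, so the relevant object is
\[
\Lambda = \{(b_1,b_2,b_3)\ :\ b_1 \in \mathcal{J}^{-1},\ b_2 - t b_1 \in O_F,\ b_3 - a b_1 \in O_F\},
\]
sitting in the Minkowski space $V_\infty = \prod_{\lambda : F \hookrightarrow \mathbb{C}}\mathbb{C}^3$, a real vector space of dimension $3[F:\mathbb{Q}]$; since $t,a \in O_F$, every element of $\Lambda$ has all three coordinates in $\mathcal{J}^{-1}$, as the statement $g_1 \in \mathcal{J}^{-1}O_F[x,y]$ requires. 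The $F$-linear shear $(b_1,c_2,c_3)\mapsto (b_1,\,tb_1+c_2,\,ab_1+c_3)$ carries $\mathcal{J}^{-1}\oplus O_F\oplus O_F$ bijectively onto $\Lambda$ and is unipotent at each archimedean place, hence volume preserving on $V_\infty$; therefore $\mathrm{covol}(\Lambda) = \mathrm{covol}(\mathcal{J}^{-1})\cdot\mathrm{covol}(O_F)^2 = \mathrm{Norm}_{F/\mathbb{Q}}(\mathcal{J})^{-1}\,\bigl(2^{-r_2(F)}|D_{F/\mathbb{Q}}|^{1/2}\bigr)^3$.

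Next I would take the open, centrally symmetric box $B\subset V_\infty$ defined by $|\lambda(b_1)| < 1/(3X)$, $|\lambda(b_2)| < 1/(3Y)$ and $|\lambda(b_3)| < 1/3$ for all $\lambda$; it is a product of disks (at complex places) and intervals (at real places), hence convex, and a direct computation gives $\mathrm{vol}(B) = \bigl(8/(27XY)\bigr)^{r_1(F)}\bigl(\pi^3/(729X^2Y^2)\bigr)^{r_2(F)}$. A short rearrangement shows that the hypothesis (\ref{eq:bigineq}) is exactly the inequality $\mathrm{vol}(B) > 2^{3[F:\mathbb{Q}]}\,\mathrm{covol}(\Lambda)$, so Minkowski's theorem produces a nonzero $(b_1,b_2,b_3)\in\Lambda\cap B$. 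This gives (i) directly, and (iii) follows once we know $b_1\neq 0$: if $b_1 = 0$ then $b_2,b_3\in O_F$ have all archimedean absolute values $<1$ — here $|\lambda(b_2)| < 1/(3Y) < 1$ uses the hypothesis $Y>1/3$ — so $b_2 = b_3 = 0$, contradicting nonvanishing.

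For (ii) and the final claim, take any $g_1$ satisfying (i)--(iii) and any $(x,y)$ as in Problem~\ref{prob:algint}, and write $g_1(x,y) = b_1(x+ty+a) + h(x,y)$ with $h\in O_F[x,y]$; since $b_1\in\mathcal{J}^{-1}$ and $x+ty+a\in\mathcal{J}\overline{\mathbb{Z}}$, the first summand lies in $\overline{\mathbb{Z}}$, and so does the second, so $g_1(x,y)\in\overline{\mathbb{Z}}$, which is property (1) of Problem~\ref{prob:function}. For every $\lambda:\overline{\mathbb{Z}}\to\mathbb{C}$ one has $|\lambda(g_1(x,y))| \le |\lambda(b_1)|X + |\lambda(b_2)|Y + |\lambda(b_3)| < 1$, so every conjugate of the algebraic integer $g_1(x,y)$ is less than $1$ in absolute value; its norm to $\mathbb{Q}$ is then a rational integer of absolute value $<1$, hence $0$, so $g_1(x,y) = 0$, which is (ii). Running the same estimate with arbitrary $x',y'\in\mathbb{C}$ satisfying $|x'|\le X$ and $|y'|\le Y$ gives property (2) of Problem~\ref{prob:function}, and $g_1\neq 0$ because $b_1\neq 0$; since this argument uses only (i) and (iii), every $g_1$ with those properties qualifies. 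I expect the only real friction to be the constant bookkeeping — making the Minkowski normalization (the $2^{-r_2(F)}$ in $\mathrm{covol}(O_F)$), the per-place volume factors (an area, not a length, at each complex place) and the volume-neutrality of the shear combine so that (\ref{eq:bigineq}) emerges as precisely $\mathrm{vol}(B) > 2^{3[F:\mathbb{Q}]}\,\mathrm{covol}(\Lambda)$ — together with the small point that $Y>1/3$ is exactly what rules out $b_1=0$.
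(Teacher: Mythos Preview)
Your proposal is correct and follows essentially the same route as the paper: the lattice $\Lambda$ you write down is precisely the paper's $L=\mathcal{J}^{-1}(x+ty+a)+O_F y+O_F$ read off in coordinates $(b_1,b_2,b_3)$, your shear computation of $\mathrm{covol}(\Lambda)$ matches the paper's covolume, your box $B$ is the paper's $S(1/(3X),1/(3Y),1/3)$, and the Minkowski inequality, the $Y>1/3$ argument for $b_1\neq 0$, and the norm argument for (ii) are identical. The only cosmetic differences are that the paper first introduces general radii $d_1,d_2,d_3$ and then optimizes under $d_1X+d_2Y+d_3=1$, whereas you go straight to the optimal choice, and that you spell out the verification of Problem~\ref{prob:function} a bit more explicitly than the paper does.
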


\begin{proof}Let $\mathbb{R}_F = \mathbb{R} \otimes_{\mathbb{Q}} F = \oplus_{v \in M_\infty} F_v$ where $M_{\infty}$ 
is the set of archimedean places of $F$ .  Give $\mathbb{R}_F$ the Euclidean norm resulting from the usual Euclidean norms $| \ |_v$ 
on the $F_v$.  (Note that the normalized absolute value on $F_v$ is $|\ |_v^{[F_v:\mathbb{R}]}$.)    Let $V = \mathbb{R}_Fx + \mathbb{R}_Fy + \mathbb{R}_F$ be the real vector space of all 
polynomials of degree at most $1$ over $\mathbb{R}_F$.  We give $V$ the Euclidean inner product resulting from viewing
it as a free $\mathbb{R}_F$-module on $\{x, y, 1\}$.  Let $L \subset V $ be the $O_F$-sublattice 
\begin{equation}
\label{eq:Ldef}
L = \mathcal{J}^{-1} \cdot (x + ty + a) + O_F \cdot y + O_F.
\end{equation}
  Then $$\mathrm{covolume}(V/L) =  2^{-3r_2(F)} |D_{F/\mathbb{Q}}|^{3/2} \mathrm{Norm}_{F/\mathbb{Q}}(\mathcal{J})^{-1}$$
by \cite[Lemma 2, Chap. V.2]{Lang}. 

For $d > 0$ define $B(0,d)$ to be the set of all $\xi  = (\xi_v)_{v \in M_\infty} \in R_F = \oplus_{v \in M_\infty}F_v$ such that $|x_v| < d$ for all $v \in M_\infty$.   Consider the convex symmetric subset 
$$S(d_1, d_2, d_3) = \{r_1 x + r_2 y + r_3: r_1 \in B(0, d_1), r_2 \in B(0, d_2), r_3 \in B(0,d_3) \}.$$
Then 
$$\mathrm{vol}(S(d_1, d_2, d_3)) = (2^{r_1(F)} \pi^{ r_2(F)})^3 (d_1 d_2 d_3)^{[F:\mathbb{Q}]}.$$
Suppose
\begin{equation}
\label{eq:inequality1}
\mathrm{vol}(S)  > 2^{3[F:\mathbb{Q}]} \mathrm{colvolume}(V/L)
\end{equation} 
 Minkowski's theorem then 
guarantees that there is a non-zero 
$g_1(x,y) = b_1 x + b_2 y  + b_3\in L \cap S$.

Suppose
$(x,y) \in \overline{\mathbb{Z}}^2$ has the properties in Problem \ref{prob:algint}, so that $x + ty + a \in \mathcal{J} \overline{\mathbb{Z}}$, 
$|\lambda(x)| < X$ and $|\lambda(y)| < Y$ for all embeddings $\lambda:\overline{\mathbb{Z}} \to \mathbb{C}$.  From the definition of $L$
and that fact that $g_1$ is a polynomial in $L \cap X$, 
we find that $g_1(x,y) \in \overline{\mathbb{Z}}$ and
$$|\lambda(g_1(x,y))| <   d_1 X + d_2 Y + d_3$$
for all $\lambda$.  Thus if we choose $d_1, d_2, d_3$ such that 
\begin{equation}
\label{eq:optimal}
d_1 X + d_2 Y + d_3 = 1
\end{equation} we can conclude that $g_1(x,y) = 0$ since the norm of $g_1(x,y)$ to $\mathbb{Z}$ is an integer of absolute
value less than $1$.  The choice of $d_1, d_2, d_3 > 0$ for which (\ref{eq:optimal}) holds and $\mathrm{vol}(S)$ is maximized is
$$(d_1, d_2, d_3)= (1/(3X), 1/(3Y), 1/3)$$
leading to 
$$\mathrm{vol}(S) = (2^{r_1(F)} \pi^{ r_2(F)})^3 (d_1 d_2 d_3)^{[F:\mathbb{Q}]} =  (2^{r_1(F)} \pi^{ r_2(F)})^3 \cdot (3^{-3}/ (XY))^{[F:\mathbb{Q}]}.$$
Combining this with the Minkowski inequality (\ref{eq:inequality1}) leads to the conclusion that if $XY$ satisfies the inequality in (\ref{eq:bigineq}),
then (i) and (ii) of Theorem \ref{thm:lin} hold.

Finally, suppose $b_1 = 0$. The definition of $L$ in (\ref{eq:Ldef}) then shows that $g_1(x,y) = b_2 y + b_3$ with $b_2, b_3 \in O_F$.  However,
property (i) of Theorem \ref{thm:lin} together with our assumption that $Y > 1/3$ forces $b_2$ and $b_3$ to have all conjugates of absolute value less than $1$.  This
forces $b_2 = b_3 = 0$ as well, contradicting the fact that $g_1(x,y)$ is a non-zero polynomial.
\end{proof}

\begin{rem} There may be more than one $g_1(x,y)$ with the properties in Theorem \ref{thm:lin}.  
\end{rem}

\section{Adelic subsets of the zero locus of the first auxiliary function.}

The strategy now for studying Problem \ref{prob:algint} is to use the fact that all solutions must be
on the zero locus of the auxillary function described in Theorem \ref{thm:lin}. This zero locus is
an affine line. We will determine the adelic constraints that the Galois conjugates of a point on this line must satisfy which are
equivalent to providing a solution to Problem \ref{prob:algint}.  We then apply adelic capacity theory on the line
to determine whether or not there are infinitely many such solutions, and whether there is a second auxillary polynomial
with the right adelic properties  which is not divisible by the first one produced by Theorem \ref{thm:lin}.

Throughout this section, we fix the following notations.

\begin{dfn}
\label{def:definitions} Let $g_1(x,y) = b_1 x + b_2 y + b_3$ be a polynomial wth the properties in Theorem \ref{thm:lin}. 
Let $M_F = \Mfin \cup \Minf$ be the set of all absolute values $v$ of $F$, where $\Mfin$ (resp. $\Minf$) is the
set of finite (resp. infinite) places.  Let $\overline{F}_v$ be an algebraic closure of $F_v$.
If $v \in \Mfin$, let $| \ |_v$ be an extension to $\overline{F}_v$ of the usual normalized absolute value on $F_v$,  Let $|\mathcal{J}|_v$
in this case  be the value of $| \ |_v$ on any element of $\mathcal{J} \subset O_F$ which generates the completion of $\mathcal{J}$ at $v$.
If $v \in M_{F,\inf}$, identify $\overline{F}_v$ with $\mathbb{C}$ and let $| \ |_v$ be the usual Euclidean absolute value.  
Define a subset of $E_v$ of $\overline{F}_v$ in the following way:
\begin{enumerate}
\item[i.]  If $v$ is non-archimedean, let $E_v$ be the set of $y \in \overline{F}_v$ such that $|y|_v \le 1$, $|b_2 y + b_3|_v \le |b_1|_v$ and
$|-(b_2 y + b_3)/b_1 + ty + a |_v \le |\mathcal{J}|_v$.  
\item[ii.]  If $v$ is archimedean, let $E_v$ be the set of $y \in \overline{F}_v$ such that $|y|_v \le Y$ and $|b_2 y + b_3|_v \le |b_1|_v\cdot X$.
\end{enumerate}
\end{dfn}

\begin{lemma}
\label{lem:answers}  If $v\in \Mfin$  then $E_v$ is either empty or a disk of the form $$D(c_v, r_v) = \{y \in \overline{F}_v: |y - c_v|_v \le r_v\}$$ for some
$c_v \in F_v$ and $0 \le r_v \in \mathbb{R}$.  For all but finitely many  $v\in \Mfin$ one can take $c_v = 0$ and $r_v = 1$, in which case $E_v = D(0,1)$.  If $v$ is archimedean, then $E_v$ is either empty or the non-empty intersection of two disks in $\overline{F}_v  = \mathbb{C}$
which have centers at $0$ and at a point in $F_v$.  The adelic set $\mathcal{E} = \prod_v E_v$ has capacity relative the point $\infty$ on $\mathbb{P}^1_F$ 
equal to 
\begin{equation}
\label{eq:capequation} \gamma(\mathcal{E}) = \prod_{v \in M_F} \gamma_v(E_v)
\end{equation}
where $\gamma_v(E_v)$ is the local capacity of $E_v$ as a subset of $\mathbb{P}^1(\overline{F}_v) - \{\infty\} = \mathbb{A}^1(\overline{F}_v) = \overline{F}_v$.
One has $\gamma_v(E_v) = r_v^{[F_v:\mathbb{Q}_{p(v)}]}$ if $v$ is finite of residue characteristic $p(v)$.  If $v$ is infinite, $\gamma_v(E_v)$ is computed in Theorem \ref{thm:explicit} below.
\end{lemma}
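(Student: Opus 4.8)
The plan is to prove the three assertions in order: the disk structure of $E_v$ at the finite places, the two‑disk structure at the archimedean places, and finally the capacity formula, the last being essentially a bookkeeping exercise built on the first two together with the adelic capacity theory of Cantor~\cite{Cantor} and Rumely~\cite{Rumely}. For the finite places, fix $v\in\Mfin$ and note that each of the three conditions cutting out $E_v$ in Definition~\ref{def:definitions}(i) has the form $|L(y)|_v\le\rho$ for an affine‑linear $L\in F[y]$ and a real $\rho\ge 0$: the first is $|y|_v\le 1$; the second is $|b_2 y+b_3|_v\le|b_1|_v$; and the third, after clearing the denominator $b_1$ (nonzero, by Theorem~\ref{thm:lin}), is $|(tb_1-b_2)\,y+(ab_1-b_3)|_v\le|b_1|_v\,|\mathcal{J}|_v$. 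Each such locus is either all of $\overline{F}_v$, or empty, or a closed disk whose center $-(\text{constant term})/(\text{leading coefficient})$ lies in $F$. Since in a non‑archimedean field any two disks are nested or disjoint, a finite intersection of disks is empty or equals the smallest of them; hence $E_v$ is empty or a disk $D(c_v,r_v)$, and since $E_v\subseteq D(0,1)$ its center can be taken to be $0$ or one of the centers $-b_3/b_2$, $-(ab_1-b_3)/(tb_1-b_2)$, all of which lie in $F\subseteq F_v$. For the ``all but finitely many'' statement, outside a finite set of $v$ one has $|\mathcal{J}|_v=1$, $|b_1|_v=1$ (as $b_1\neq 0$), and $|b_2|_v,|b_3|_v,|t|_v,|a|_v\le 1$ (as the $b_i$ lie in $\mathcal{J}^{-1}O_F$ and $t,a\in O_F$); for such $v$ the ultrametric inequality shows that $|y|_v\le 1$ already forces the second and third conditions, so $E_v=D(0,1)$.

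For the archimedean places, fix $v\in\Minf$ and identify $\overline{F}_v$ with $\mathbb{C}$. By definition $E_v$ is the intersection of the disk $\{\,|y|_v\le Y\,\}$ centered at $0$ with the locus $\{\,|b_2 y+b_3|_v\le|b_1|_v X\,\}$, and the latter is a closed disk centered at $-b_3/b_2\in F$ if $b_2\neq 0$, and is $\mathbb{C}$ or $\emptyset$ if $b_2=0$. Thus $E_v$ is either empty or a non‑empty intersection of (at most) two disks with centers at $0$ and at a point of $F_v$. Its capacity $\gamma_v(E_v)$, an explicit function of $X$, $Y$, $|b_1|_v$, $|b_2|_v$, $|b_3|_v$, is then what is computed in Theorem~\ref{thm:explicit}, which I would simply cite here.

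For the capacity formula I would invoke adelic capacity theory directly. By the previous two steps, $\mathcal{E}=\prod_v E_v$ is an adelic set of the kind the theory handles: each $E_v$ is a disk (or, at the archimedean places, a compact intersection of two disks), so $\mathcal{E}$ lies in the class of adelic sets for which \cite{Cantor,Rumely} define a capacity, and $E_v=D(0,1)$ for all but finitely many $v$. Its capacity relative to $\infty\in\mathbb{P}^1$ is then, by the construction in \cite{Cantor,Rumely}, the product~(\ref{eq:capequation}) of the local capacities, a genuinely finite product since $\gamma_v(D(0,1))=1$. At a finite place $v$, the transfinite diameter relative to $\infty$ of a disk $D(c_v,r_v)\subset\overline{F}_v$ equals $r_v$: for each $n$ one can choose points of the disk pairwise at distance exactly $r_v$, using that the residue field of $\overline{F}_v$ is infinite. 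With the weighting of the place $v$ by its local degree $[F_v:\mathbb{Q}_{p(v)}]$ that is built into the global capacity of \cite{Cantor,Rumely}, this yields $\gamma_v(E_v)=r_v^{[F_v:\mathbb{Q}_{p(v)}]}$, consistent with $\gamma_v(D(0,1))=1$.

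I expect the only genuinely delicate points to be, first, the reduction from an intersection of disks to a single disk at the finite places, which rests squarely on the nested‑or‑disjoint dichotomy for non‑archimedean disks; and, more significantly, matching the normalization conventions of Definition~\ref{def:definitions} (the normalized $v$‑adic absolute values at the finite places, the Euclidean ones at the archimedean places) to those of \cite{Cantor,Rumely}, so that the product formula~(\ref{eq:capequation}) holds verbatim and the local capacity of a radius‑$r_v$ disk comes out exactly as $r_v^{[F_v:\mathbb{Q}_{p(v)}]}$. Carrying the local degrees $[F_v:\mathbb{Q}_{p(v)}]$ correctly through every step is the main bookkeeping hazard; everything else is elementary.
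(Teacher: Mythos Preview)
Your proposal is correct and follows essentially the same approach as the paper: the non-archimedean nested-or-disjoint dichotomy for the finite places, the observation that the defining inequalities reduce to $D(0,1)$ for almost all $v$, and a direct appeal to Rumely for the product formula and the local capacity of a disk. The paper's own proof is much terser (it simply cites \cite[p.~366]{Rumely} for~(\ref{eq:capequation}) and \cite[p.~352]{Rumely} for $\gamma_v(E_v)=r_v^{[F_v:\mathbb{Q}_{p(v)}]}$), whereas you spell out the affine-linear form of each constraint and give the elementary transfinite-diameter argument at finite places, but the underlying strategy is identical.
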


\begin{proof} The description of $E_v$ is clear from Definition \ref{def:definitions} together with the fact that the intersection of any two non-archimedean disks in
$\overline{F}_v$ for $v \in \Mfin$ is either empty or equal to a disk.  The fact that $E_v = D(0,1)$ for all but finitely many $v \in \Mfin$ follows
from the the fact each of the three inequalities defining $E_v$ describes either $D(0,1)$ or all of $\overline{F}_v$ for all but finitely many $v$.  Hence
$\mathcal{E}$ has a well defined capacity with respect to $\infty$, and the formula (\ref{eq:capequation}) is shown in \cite[p. 366]{Rumely}.  The fact that 
$\gamma_v(E_v) = r_v^{[F_v:\mathbb{Q}_{p(v)}]}$ for $v \in \Mfin$ is shown in \cite[p. 352]{Rumely} on taking account our normalization of $| \ |_v$.   
\end{proof}

\begin{rem}
\label{rem:easyremark}
The constants $c_v$ and $r_v$ for $v \in \Mfin$ are readily computed from the coefficients of $g_1(x,y) = b_1 x + b_2 y + b_3$.  The same is true 
for the centers and radii of the two disks whose intersection is $E_v$ when $v \in \Minf$.  Thus (\ref{eq:capequation}) is readily computable from
$g_1(x,y)$ using Theorem \ref{thm:explicit}.
\end{rem}

\begin{thm} 
\label{thm:mainthm} Let $\gamma(\mathcal{E})$ be as in (\ref{eq:capequation}).
\begin{enumerate}
\item[1.] If $\gamma(\mathcal{E}) > 1$ then there are infinitely many solutions $(x,y)$ to Problem \ref{prob:algint}.  In this case, any
polynomial $g_i(x,y) \in F[x,y]$ with the properties in Problem \ref{prob:function} must be divisible
by $g_1(x,y)$ in $F[x,y]$.  In particular, the intersection of the zero loci of all polynomials 
$g_i(x,y)$ with the properties in Problem \ref{prob:function} is the zero locus of $g_1(x,y)$, which is infinite.
\item[2.] If $\gamma(\mathcal{E}) < 1$ then there are only finitely many solutions $(x,y)$ to Problem \ref{prob:algint}.
The common zero locus of the polynomials in Problem \ref{prob:function} is finite.  
\item[3.]  Suppose $\gamma(\mathcal{E}) \ne 0$.  As a function of $X> 0 $ and $Y > 1/3$, the value of $\gamma(\mathcal{E})$ strictly increases when both $X$ and $Y$ are increased. 
In particular, suppose $\gamma(\mathcal{E}) = 1$ for particular values of $X$ and $Y$.  Any increase of both $X$ and $Y$ leads to the conclusions of part (1), while any 
decrease of both $X$ and $Y$ leads to the conclusions of part (2).  This establishes parts (1) and (2) of Theorem \ref{thm:mainthmearly}.
\end{enumerate}
\end{thm}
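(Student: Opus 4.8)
The plan is to prove the three parts of Theorem \ref{thm:mainthm} by translating Problem \ref{prob:algint} into a question about algebraic points on the affine line $\mathbb{P}^1_F \setminus \{\infty\}$ lying in the adelic set $\mathcal{E} = \prod_v E_v$, and then invoking Rumely's Fekete–Szegő-type theorem for the capacity $\gamma(\mathcal{E})$ relative to $\infty$. The first step is the dictionary: given a pair $(x,y) \in \overline{\mathbb{Z}}^2$ satisfying the congruence $x + ty + a \equiv 0 \bmod \mathcal{J}\overline{\mathbb{Z}}$, Theorem \ref{thm:lin} forces $g_1(x,y) = b_1 x + b_2 y + b_3 = 0$, hence $x = -(b_2 y + b_3)/b_1$, so the solution is determined by $y$ alone. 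I would check that the conditions ``$(x,y)$ solves Problem \ref{prob:algint}'' are equivalent to ``$y$ is an algebraic integer all of whose conjugates at each place $v$ lie in $E_v$'' — the archimedean conditions $|y'| \le Y$, $|x'| = |b_2 y' + b_3|/|b_1| \le X$ are exactly (ii) of Definition \ref{def:definitions}, and at a finite $v$ the conditions ``$y$ integral'' ($|y|_v \le 1$), ``$x$ integral'' ($|b_2 y + b_3|_v \le |b_1|_v$ since $g_1 \in \mathcal{J}^{-1}O_F[x,y]$ and $b_1 \ne 0$), and ``$x + ty + a \in \mathcal{J}\overline{\mathbb{Z}}$'' ($|{-}(b_2y+b_3)/b_1 + ty + a|_v \le |\mathcal{J}|_v$) are exactly (i). So the solution set of Problem \ref{prob:algint} is in bijection with the set of algebraic integers $y$ such that every Galois conjugate of $y$ lies in $\mathcal{E}$ place-by-place.

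Once this is set up, part (1) follows from the positive half of Rumely's theorem (\cite{Rumely}, the Fekete–Szegő direction): when $\gamma(\mathcal{E}) > 1$ — and since $\infty \notin E_v$ for the archimedean $v$ (indeed $E_v$ is bounded there) and for all but finitely many finite $v$ we have $E_v = D(0,1)$ — there are infinitely many algebraic integers $y$ with all conjugates in $\mathcal{E}$, giving infinitely many solutions to Problem \ref{prob:algint}. For the divisibility claim, if $g_i(x,y)$ has the properties in Problem \ref{prob:function}, then the same product-formula argument used in the excerpt shows $g_i(x,y) = 0$ on every solution $(x,y)$; restricting to the line $g_1 = 0$, the polynomial $h(y) := g_i(-(b_2y+b_3)/b_1, y) \in F[y]$ vanishes at infinitely many algebraic integers $y$, hence is identically zero, which means $g_1 \mid g_i$ in $F[x,y]$ (as $g_1$ is linear and monic in $x$ up to the unit $b_1$). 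Part (2) is the other direction of the Fekete–Szegő–Rumely theorem: $\gamma(\mathcal{E}) < 1$ forces only finitely many algebraic integers with all conjugates in $\mathcal{E}$, hence finitely many solutions to Problem \ref{prob:algint}; finiteness of the common zero locus of all the $g_i$ then follows because those solutions already cut the line $g_1 = 0$ down to a finite set, and any $g_i$ not divisible by $g_1$ will (together with $g_1$) have finite intersection.

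For part (3), the monotonicity of $\gamma(\mathcal{E})$ in $X$ and $Y$: from \eqref{eq:capequation}, $\gamma(\mathcal{E}) = \prod_v \gamma_v(E_v)$, and only the archimedean factors depend on $X,Y$. At an archimedean $v$, $E_v$ is the intersection of the disk $\{|y|_v \le Y\}$ centered at $0$ with the disk $\{|b_2 y + b_3|_v \le |b_1|_v X\}$ centered at $-b_3/b_2$ (or a half-plane/all of $\mathbb{C}$ in degenerate cases). Increasing both $X$ and $Y$ enlarges both disks, hence enlarges $E_v$; since capacity is monotone under inclusion and, by the explicit formula in Theorem \ref{thm:explicit} (which I would quote), \emph{strictly} increases when the set genuinely grows (as it does here as long as $E_v$ is non-degenerate, which is guaranteed by $\gamma(\mathcal{E}) \ne 0$), each nonzero archimedean factor strictly increases, so $\gamma(\mathcal{E})$ strictly increases. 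Thus if $\gamma(\mathcal{E}) = 1$ for some $(X,Y)$, any increase of both pushes $\gamma(\mathcal{E}) > 1$ (apply part (1)) and any decrease pushes $\gamma(\mathcal{E}) < 1$ (apply part (2)); combined with Theorem \ref{thm:lin} producing the linear $g_1$ under \eqref{eq:bigineq}, this is exactly parts (1) and (2) of Theorem \ref{thm:mainthmearly}.

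The main obstacle I expect is part (3): making the \emph{strict} monotonicity rigorous requires the explicit archimedean capacity formula of Theorem \ref{thm:explicit} and a careful case analysis of the geometry of the intersection of two disks in $\mathbb{C}$ — in particular handling the degenerate configurations (one disk contained in the other, tangency, the half-plane limit when $b_2 = 0$) and verifying that the capacity is strictly increasing in each of $X$ and $Y$ on the relevant ranges. The translation dictionary and the two applications of Rumely's theorem are essentially bookkeeping once the equivalence in the first step is pinned down; the subtlety there is only checking that $g_1 \in \mathcal{J}^{-1}O_F[x,y]$ together with $b_1 \ne 0$ makes the integrality of $x$ equivalent to $|b_2 y + b_3|_v \le |b_1|_v$ at every finite $v$, which is a direct consequence of part (iii) of Theorem \ref{thm:lin}.
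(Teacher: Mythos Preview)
Your proposal follows the same route as the paper for parts (1) and (2): use $g_1(x,y)=0$ to write $x = -(b_2 y + b_3)/b_1$, check that the conditions of Problem \ref{prob:algint} on $(x,y)$ become exactly the condition that every conjugate of $y$ lie in $E_v$ for each $v$, and then invoke the Fekete--Szeg\H{o} theorems of Cantor and Rumely on $\mathbb{P}^1$. Your handling of the divisibility claim in (1), via the observation that $h(y) = g_i(-(b_2y+b_3)/b_1,\, y)$ has infinitely many roots and hence vanishes identically, is the natural elaboration of what the paper leaves to a citation.

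For part (3) the paper's argument is shorter than the case analysis you anticipate. Rather than appealing to the explicit formula of Theorem \ref{thm:explicit}, the paper notes that $\gamma(\mathcal{E}) \ne 0$ forces each archimedean $E_v$ to have nonempty interior; increasing $X,Y$ to $X',Y'$ enlarges both defining disks, and one can then find $0 < \lambda < 1$ with $E_v \subset \lambda \cdot E'_v$. Monotonicity together with the scaling law $\gamma_\infty(\lambda E'_v) = \lambda\, \gamma_\infty(E'_v)$ gives $\gamma_v(E_v) \le \lambda\,\gamma_v(E'_v) < \gamma_v(E'_v)$ directly, with no degenerate configurations to check. Your approach via the explicit formula would also succeed but is more laborious; the scaling trick is the cleaner way to extract \emph{strict} monotonicity.
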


\begin{proof}   Recall that a solution $(x,y)$ to Problem \ref{prob:algint} is a pair of $x, y \in \overline{\mathbb{Z}}$  such
that $x + ty  + a \equiv 0$ mod $\mathcal{J} \overline{\mathbb{Z}}$ and all archimedean conjugates $x'$ and $y'$ of $x$
and $y'$  satisfy $|x'| \le X$ and $|y'| \le Y$.  We know that 
$$g_1(x,y) = b_1 x + b_2 y + b_3 = 0$$
for all such $(x,y)$, where $b_1 \ne 0$. So we have
\begin{equation}
\label{eq:dumb}
x = \frac{-b_2 y - b_3}{b_1}
\end{equation}
Because of (\ref{eq:dumb}), these conditions on $x$ and $y$ translate into the condition that all conjugates over $F$ of the element $y \in \overline{F}$
lie in the set $E_v \subset \overline{F}_v$ described in Definition \ref{def:definitions} for each $v \in M_F$. 
Parts (1) and (2) of the Theorem are now consequences of Cantor's results concerning Fekete-Szeg\"o theorems for the projective line (see \cite[Theorems 5.1.1 and 5.1.2]{Cantor} and \cite[Theorems 6.3.1 and 6.3.2]{Rumely}).

For part (3), note that if $\gamma(\mathcal{E}) \ne 0 $ then $\gamma_v(E_v) \ne 0$ for all $v$.  In particular,  $E_v$ cannot be empty or a single point.  Supose now that $v \in \Minf$. Then  
$E_v$ is  intersection of two closed disks, and $E_v$ has non-empty interior. Increasing $X$ and $Y$ to some $X' > X$ and $Y' > Y$ then expands both disks.  This puts
the set $E_v$ for $X$ and $Y$ into $\lambda \cdot E'_v$ for some positive real $\lambda < 1$ when $E'_v$ is the corresponding intersection of disks for $X'$ and $Y'$.  We thus have $\gamma_\infty(E_v) \le \gamma_\infty(\lambda E'_v)  = \lambda \cdot \gamma_\infty(E'_v) < \gamma_\infty(E'_v)$.  This proves that $\gamma_\infty(E_v)$ strictly increases when we increase both $X$ and $Y$, which implies part (3) of the Theorem. 
\end{proof}

We now give the formula for the capacity $\gamma_v(E_v)$ for archimedean $v$ which was referred to at the end of the statement of Lemma \ref{lem:answers}.
Let $D(a,t)$ the closed disk in $\mathbb{C}$ with center $a \in \mathbb{C}$ and radius $r \ge 0$. 

\begin{thm}
\label{thm:explicit}  Suppose $E_v$ is the intersection in $\overline{F}_v = \mathbb{C}$ of two closed disks, one of which is centered at the origin. Then there is non-zero complex number $\xi$ such that $E_v = \xi \cdot V$ where $V = D(0,r) \cap D(1,s)$ for some $r, s \ge 0 $.  One has $\gamma_v(E_v) = (|\xi| \cdot \gamma_\infty(V))^{[K_v:\mathbb{R}]}$ where $\gamma_\infty(V)$ is the classical transfinite diameter of $V$, which may computed in the following way.   If $V = \emptyset$ or $r + s = 1$ then $\gamma_\infty(V) = 0$.  If $r \ge 1 + s$ then $V = D(1,s)$ and $\gamma_\infty(V) = s$.  If $s \ge 1 + r$ then 
$V = D(0,r)$ and $\gamma_\infty(V) = r$. Otherwise, the boundaries of $D(0,r)$ and $D(1,s)$ intersect at two points $u$ and $\overline{u}$ with $u$ in the upper half plane.  Let
$\alpha \in (0,\pi)$ be the angle  between the boundary of $D(0,r)$ and the boundary of $D(1,s)$ at the intersection point $a$.  There is a unique point $\zeta$ in the upper half plane
such that
\begin{equation}
\label{eq:zetadef}
\zeta  = \left ( \frac{\overline{u} - r}{u - r} \right )^{\pi/(2\pi - \alpha)}
\end{equation}
when we compute the complex exponential using the branch of $\mathrm{log}$ with imaginary part lying in $[0,2\pi]$.  One has 
\begin{equation}
\label{eq:answer} \gamma_\infty(V) =  \frac{1}{2 \mathrm{Im}(\zeta)} \cdot \frac{\pi}{2 \pi - \alpha} \cdot |\overline{u} - u|
\end{equation}
\end{thm}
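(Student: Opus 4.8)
The plan is to reduce the statement to a single classical transfinite‑diameter computation and then carry that computation out by exhibiting an explicit conformal map.

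First I would handle the normalization and the degenerate cases. Writing $c$ for the centre of the second of the two disks cutting out $E_v$, one takes $\xi=c$ if $c\ne 0$ and $\xi=1$ otherwise, so that $E_v=\xi V$ with $V=D(0,r)\cap D(1,s)$ after rescaling by $1/\xi$. The classical transfinite diameter satisfies $\gamma_\infty(\xi V)=|\xi|\,\gamma_\infty(V)$, and with our normalization of $|\ |_v$ Rumely's archimedean local capacity is $\gamma_v(E)=\gamma_\infty(E)^{[K_v:\mathbb{R}]}$ with $[K_v:\mathbb{R}]\in\{1,2\}$ the local degree (this parallels the exponent $[F_v:\mathbb{Q}_{p(v)}]$ appearing in Lemma \ref{lem:answers}); combining these gives $\gamma_v(E_v)=(|\xi|\,\gamma_\infty(V))^{[K_v:\mathbb{R}]}$. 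It then remains to compute $\gamma_\infty(V)$. If $V=\emptyset$ then $\gamma_\infty(V)=0$ by convention; if $r+s=1$ the two circles are externally tangent (or a radius vanishes) and $V$ is a single point, hence $\gamma_\infty(V)=0$ because finite sets have capacity $0$; if $r\ge 1+s$ then $D(1,s)\subseteq D(0,r)$ so $V=D(1,s)$ and $\gamma_\infty(V)=s$, the transfinite diameter of a disk being its radius, and symmetrically $\gamma_\infty(V)=r$ when $s\ge 1+r$.

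The substance is the remaining case $r+s>1$, $r<1+s$, $s<1+r$, in which the two circles meet transversally at a conjugate pair of non‑real points $u,\bar u$ (with $u$ in the upper half‑plane), $V$ is a closed Jordan domain with interior angle $\alpha\in(0,\pi)$ at each corner, and the inequalities force $1-s<r<1+s$ so that $r$ is an interior point of the boundary arc $A_0:=\partial D(0,r)\cap V$. I would produce a conformal isomorphism $h:\widehat{\mathbb{C}}\setminus V\to\{w:|w|>1\}$ with $h(\infty)=\infty$, so that $\gamma_\infty(V)=1/|h'(\infty)|$ by the classical description of the transfinite diameter via the exterior Riemann map, taking $h=h_3\circ h_2\circ h_1$ as follows. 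First, $h_1(z)=\frac{r-\bar u}{r-u}\cdot\frac{z-u}{z-\bar u}$ is the Möbius map with $u\mapsto 0$, $\bar u\mapsto\infty$, $r\mapsto 1$; since every circle through $u$ and $\bar u$ maps to a line through the origin, $h_1$ carries $A_0$ to a ray from the origin through $1$, i.e. to $\mathbb{R}_{\ge 0}$, and the other boundary arc of $V$ to a second ray from the origin, so $h_1(V)$ is a wedge of angle $\alpha$ with one edge on $\mathbb{R}_{>0}$. An orientation check — at the rightmost point $r$ of the convex set $V$ the positively oriented boundary passes through $r$ moving upward, so under $h_1$ the positively oriented arc $h_1(A_0)=\mathbb{R}_{\ge 0}$ is traversed from $\infty$ through $1$ to $0$, and since $h_1$ keeps the interior on the left, $h_1(V)$ is the wedge just below $\mathbb{R}_{>0}$ — shows $h_1(V)=\{w:2\pi-\alpha\le\arg w\le 2\pi\}$, $\widehat{\mathbb{C}}\setminus h_1(V)=\{w:0<\arg w<2\pi-\alpha\}$, and $h_1(\infty)=w_1:=\frac{\bar u-r}{u-r}$, a point of the unit circle in the interior of this complementary wedge. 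Next, $h_2(w)=w^{\pi/(2\pi-\alpha)}$, computed with the branch of $\log$ having imaginary part in $[0,2\pi]$, is a conformal isomorphism of $\{w:0<\arg w<2\pi-\alpha\}$ onto the upper half‑plane $\mathbb{H}$, with $h_2(w_1)=\zeta$ as in (\ref{eq:zetadef}); since $|w_1|=1$ one gets $|\zeta|=1$ and $\zeta\in\mathbb{H}$. Finally $h_3(\eta)=\frac{\eta-\bar\zeta}{\eta-\zeta}$ maps $\mathbb{H}$ onto $\{w:|w|>1\}$ with $\zeta\mapsto\infty$, because $|\eta-\zeta|=|\eta-\bar\zeta|$ for real $\eta$.

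To finish I would read off $|h'(\infty)|=\lim_{z\to\infty}|h(z)/z|$ by composing three local expansions: $h_1(z)=w_1+w_1(\bar u-u)/z+O(z^{-2})$ near $z=\infty$; $h_2(w)=\zeta+h_2'(w_1)(w-w_1)+\cdots$ near $w=w_1$ with $w_1h_2'(w_1)=\tfrac{\pi}{2\pi-\alpha}\,\zeta$; and $h_3(\eta)\sim(\zeta-\bar\zeta)/(\eta-\zeta)$ near $\eta=\zeta$. Chaining these, $h(z)\sim -\tfrac{2\pi-\alpha}{\pi}\cdot\frac{(\zeta-\bar\zeta)\,z}{\zeta\,(u-\bar u)}$ as $z\to\infty$, so that, using $|\zeta|=1$ and $\operatorname{Im}\zeta>0$,
\[
\gamma_\infty(V)=\frac{1}{|h'(\infty)|}=\frac{\pi}{2\pi-\alpha}\cdot\frac{|u-\bar u|}{|\zeta-\bar\zeta|}=\frac{1}{2\operatorname{Im}\zeta}\cdot\frac{\pi}{2\pi-\alpha}\cdot|\bar u-u|,
\]
which is (\ref{eq:answer}). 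I expect the main obstacle to be the orientation bookkeeping in the construction of $h_1$: one must verify rigorously (not merely ``from the figure'') that $h_1$ sends $V$ to the wedge lying just below $\mathbb{R}_{>0}$ — equivalently that $w_1$ lands in $\{w:0<\arg w<2\pi-\alpha\}$ in the $[0,2\pi]$ branch — since it is exactly this that forces $\zeta\in\mathbb{H}$ and yields the clean closed form; everything else is routine Möbius and power‑map calculus together with standard potential theory (capacity of a disk, vanishing of capacity on finite sets, and the behaviour of the transfinite diameter under similarities and under the exterior Riemann map).
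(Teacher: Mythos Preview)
Your proposal is correct and follows essentially the same route as the paper: the paper likewise composes the M\"obius map $w(z)=\frac{(u-z)(\bar u-r)}{(u-r)(\bar u-z)}$ (which coincides with your $h_1$), the power map $\nu(w)=w^{\pi/(2\pi-\alpha)}$ (your $h_2$), and a M\"obius map sending $\zeta\mapsto\infty$, then reads off the capacity from the behaviour at $z=\infty$ via Rumely's version of the exterior Riemann map criterion. The only cosmetic differences are that the paper uses $1/(\nu-\zeta)$ in place of your $h_3(\eta)=(\eta-\bar\zeta)/(\eta-\zeta)$ and normalizes so that $f(z)/z\to 1$ rather than invoking $\gamma_\infty(V)=1/|h'(\infty)|$; your orientation discussion is in fact more careful than the paper's.
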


To prove this result we will need the following fact from \cite[p. 339]{Rumely} .

\begin{lemma}
\label{lem:Rumlem} (Rumely) Suppose $E$ is a connect subset of $\mathbb{C}$.  Let  $E^c$ be the complement of $E$ in 
$\mathbb{P}^1(\mathbb{C}) = \mathbb{C} \cup \{\infty\}$.  Suppose $f(z)$ is a conformal map which takes 
$E^c$ to 
the complement of the closed disc $D(0,R)$ of radius $R > 0$.  Suppose further that $\lim_{z \to \infty} f(z)/z = 1$. Then $\gamma_\infty(E) = R$.  The Green's function
$G(z,\infty:E)$ equals $\mathrm{log}|f(z)/R)|$ for $z \in E^c$ and $G(z,\infty;E) = 0$ for $z \in E$.  
\end{lemma}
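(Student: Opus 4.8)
The plan is to derive Lemma~\ref{lem:Rumlem} from two classical facts of plane potential theory --- the conformal invariance of the Green's function of a domain with respect to a marked point, and the classical identity $\gamma_\infty(E) = e^{-V_\infty(E)}$ relating the transfinite diameter of a compact set $E$ to the Robin constant $V_\infty(E)$ occurring in the expansion $G(z,\infty;E) = \log|z| + V_\infty(E) + o(1)$ at the pole (both are in Rumely's book). I would start with preliminaries. Since $E \subset \mathbb{C}$, the complement $\Omega := E^c$ is an open neighborhood of $\infty$ in $\mathbb{P}^1(\mathbb{C})$; pulling back $\{|w| > R'\}$ for large $R'$ under $f$ shows that $\Omega$ contains a punctured neighborhood of $\infty$ whose complement in $\mathbb{C}$ is compact and contains $E$, so $E$ is bounded and may be replaced by $\overline{E}$ without altering $\gamma_\infty$, $G(\cdot,\infty;E)$, or the hypotheses. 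The normalization $\lim_{z\to\infty} f(z)/z = 1$ says that near $\infty$ one has $f(z) = z + a_0 + a_1 z^{-1} + \cdots$; in particular $f$ has a simple pole at $z = \infty$ with $f(\infty) = \infty$, so $f$ is a conformal isomorphism of the domain $\Omega \subseteq \mathbb{P}^1(\mathbb{C})$ onto $\Omega' := \{|w| > R\} \cup \{\infty\}$ carrying the marked point $\infty$ to $\infty$.

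Next I would identify the Green's function. The Green's function of $\Omega'$ with pole at $\infty$ is the explicit function $w \mapsto \log(|w|/R)$: it is positive and harmonic on $R < |w| < \infty$, tends to $0$ as $w$ approaches the circle $|w| = R$, and equals $\log|w| - \log R$ near $\infty$. Since $f:\Omega \to \Omega'$ is a conformal isomorphism fixing the pole, I expect $G(z,\infty;E) = \log|f(z)/R|$ for $z \in \Omega$, together with the convention $G(z,\infty;E) = 0$ for $z \in E$. Rather than invoke conformal invariance as a black box --- which would tacitly require $f$ to extend to $\partial\Omega$ --- I would verify directly that $z \mapsto \log|f(z)/R|$ has the properties that characterize $G(\cdot,\infty;E)$ uniquely: it is harmonic on $\Omega \setminus \{\infty\}$ because $f$ is holomorphic and nonvanishing there, it is nonnegative, it differs from $\log|z|$ by a function bounded (indeed harmonic) near $\infty$, and it tends to $0$ as $z \to \partial E$ because the properness of the homeomorphism $f:\Omega \to \Omega'$ forces $|f(z)| \to R$ as $z \to \partial\Omega$. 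Uniqueness of the Green's function then yields the stated formula.

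Finally I would read off $R$. Using $f(z) = z(1 + o(1))$ near $\infty$, one gets $G(z,\infty;E) = \log|f(z)/R| = \log|z| - \log R + \log|1 + o(1)| = \log|z| - \log R + o(1)$, so $V_\infty(E) = -\log R$, whence $\gamma_\infty(E) = e^{-V_\infty(E)} = R$. I expect the only genuinely delicate point is the identification of $\log|f(z)/R|$ with $G(\cdot,\infty;E)$: one must check that the hypothesis that $f$ maps $E^c$ onto the \emph{complement of the closed disk of radius $R$} --- not merely into the exterior of some disk --- is exactly what pins down the boundary value $0$ on $\partial E$ via the properness argument, with no appeal to boundary regularity of $f$. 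The rest is a routine bookkeeping of normalizations.
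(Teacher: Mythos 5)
The paper offers no proof of this lemma: it is quoted verbatim as a fact from \cite[p.~339]{Rumely}, so there is no internal argument to compare yours against. Your derivation is the standard potential-theoretic one --- transplant the explicit Green's function $w \mapsto \log(|w|/R)$ of the disk exterior through $f$, verify the characterizing properties of $G(\cdot,\infty;E)$ directly, and read off the capacity from the Robin constant via $\gamma_\infty(E) = e^{-V_\infty(E)}$ --- and it is correct. You rightly isolate the one delicate point: the boundary value $0$ on $\partial E$ must come from properness of $f:E^c \to \{|w|>R\}\cup\{\infty\}$ (so that $|f(z_n)| \to R$ whenever $z_n$ leaves every compact subset of $E^c$ without tending to $\infty$), not from any boundary extension of $f$. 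The only step I would ask you to close more carefully is the appeal to ``uniqueness of the Green's function'': the maximum-principle argument that pins down $G$ from its characterizing properties needs the boundary values to be attained at regular points, i.e.\ it needs $\partial E$ to have no irregular points (or at worst a polar set of them). For the sets at hand this is automatic --- the existence of a conformal map of $E^c$ onto the exterior of a disk of radius $R>0$ forces $\overline{E}$ to be a continuum with more than one point (the complement of a single point is conformally $\mathbb{C}$, not a disk exterior), and every boundary point of a nondegenerate continuum is regular for the Dirichlet problem --- but that observation is what makes the uniqueness step legitimate and should be stated. With that sentence added, your proof is a complete and self-contained substitute for the citation.
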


\noindent {\bf Proof of Theorem \ref{thm:explicit}}
\medbreak
The first statement is clear on rotating and dilating $E_v$ appropriately, and the formula $\gamma_\infty(E_\infty) = (|\xi| \gamma_\infty(V))^{[K_v:\mathbb{R}]}$ is shown in \cite[p. 352]{Rumely}.  Since the transfinite diameter of a disk or radius $R$ is $R$, it will suffice to construct an $f(z)$ for the $E =V$ of Theorem \ref{thm:explicit} on the assumption that the boundaries of $D(0,r)$ and $D(1,s)$ intersect at the distinct points
$u$ and $\overline{u}$.  The cross ratio $w(z) = \frac{(u - z)(\overline{u} - r)}{(u - r)(\overline{u} - z)}$ has
$w(u) = 0$, $w(r) =  1$ and $w(\overline{u}) = \infty$.  Since fractional linear transformations send circles to either lines or circles, we find that
$z \to w(z)$ sends the arc formed by the boundary of $D(0,r)$ between $u$ and $\overline{u}$ to the union of the non-negative real axis with $\infty$.  Since fractional linear transformations also
preserve angles, $z \to w(z)$ sends the arc formed by the boundary of $D(1,s)$ between $u$ and $\overline{u}$ to the union of $\{\infty\}$ with the ray $L$ outward from $w(u) = 0$ which makes an angle
of $\alpha$ from the positive real axis and lies in the lower half plane.  Thus the image of $V$ under $z \to w(z)$ is the union of $\{\infty\}$ with an angular sector in $\mathbb{C}$ bounded by the non-negative real 
axis and $L$.  The complement $V^c$ is sent to set $w(V^c)$ of  non-zero complex numbers $\tau = |\tau| e^{2\pi i \theta}$ for which $0 < \theta < 2\pi - \alpha$.  

On choosing
the branch of the complex logarithm with imaginary part in $[0,2 \pi)$, we have a conformal map $\nu$ taking $w(V^c)$ to the upper half plane $H$ defined by $\nu(w) = w^{\pi/(2 \pi - \alpha)}$.   This map sends $w(\infty) = \frac{\overline{u} - r}{u - r} \in w(V^c)$ to the  point $\zeta$ in (\ref{eq:zetadef}).    We can now use the conformal automorphism $h$ of $\mathbb{P}^1(\mathbb{C})$ defined by $\nu \to h(\nu) = \frac{1}{\nu - \zeta}$ to send $\zeta$ to $h(\zeta) = \infty$ and $H = (\nu \circ w)(V^c)$ to the complement in $\mathbb{P}^1(\mathbb{C})$
 of a closed disk $D$ which has a diameter going from $0 = h(\infty)$ to $\frac{i}{\mathrm{Im}(\zeta)} = h(\mathrm{Re}(\zeta))$.  
 
 Consider now the composition
 $f(z) = c \cdot (h \circ \nu \circ w)$, where $c$ is a non-zero constant we will choose so that $\lim_{z \infty} f(z)/z = 1$.  This $f$ gives a conformal map from $V^c$ to the complement of
 the image $c D$ of $D$ by multiplication by $c$. Therefore Lemma \ref{lem:Rumlem} gives 
 \begin{equation} 
 \label{eq:final}
 \gamma_\infty(V) = \gamma_\infty(D) = \frac{|c|}{2 \mathrm{Im}(\zeta)}
 \end{equation}
 It just remains to find $c$.  Here
 $$1/h(\nu(w(z))) = \nu(w(z)) - \nu(w(\infty)) = \left ( \frac{(u - z)(\overline{u} - r)}{(u - r)(\overline{u} - z)} \right )^{\pi/(2 \pi - \alpha)} -  \left ( \frac{(\overline{u} - r)}{(u - r)} \right )^{\pi/(2 \pi - \alpha)}$$
Thus
$$z/h(\nu(w(z)))  = \zeta \cdot z \cdot \left ( \left (\frac{(1 - z^{-1} u )}{(1 - z^{-1} \overline{u})}  \right )^{\pi/(2 \pi - \alpha)} - 1 \right )$$
since $\zeta = \left ( \frac{(\overline{u} - r)}{(u - r)} \right )^{\pi/(2 \pi - \alpha)}$.  Using that 
$$(1 - z^{-1} u)/(1 - z^{-1} \overline{u}) = 1 + (\overline{u} - u) z^{-1} + \mathrm{higher \ order \ terms}$$
we find that on setting 
$$c = \lim_{z \to \infty} z/h(\nu(w(z))) = \zeta \cdot \frac{\pi}{2 \pi - \alpha} \cdot (\overline{u} - u)$$
we will have $\lim_{z \to \infty} f(z)/z =  \lim_{z \to \infty} c \cdot h(\nu(w(z)) )/z = 1$, as required.  Now (\ref{eq:final}) gives
$$\gamma_\infty(V) = \frac{1}{2 \mathrm{Im}(\zeta)} \cdot \frac{\pi}{2 \pi - \alpha} \cdot |\overline{u} - u| $$
as claimed in Theorem \ref{thm:explicit} since $|\zeta| = 1$.

\begin{rem} As a check on Theorem \ref{thm:explicit}, consider the case in which $s $ tends toward $1 +r $ from below.  Then $V$ becomes closer and closer to being all of $D(0,r)$,
and some straightforward estimates show that the formula $\gamma_\infty(V)$ in Theorem \ref{thm:explicit} tends toward $\gamma_\infty (D(0,r)) = r$ as $ s \to (1 + r)$ from below.
\end{rem}

We end this section by showing the claim made in the last sentence of the introduction that there are infinitely many examples of each
of cases 1 and 2 of Theorem \ref{thm:mainthmearly}.  We prove a more quantitative result under some additional hypotheses.

\begin{thm}
\label{thm:propexamples}  Suppose $F = \mathbb{Q}$, so that $O_F = \mathbb{Z}$.  Suppose $\mathcal{J} = p\mathbb{Z}$ for some prime $p$,
and that $X = Y = c \sqrt{p} > 1/3$ for some  fixed constant $c$ for which $2/3 > c > 0$.     
\begin{enumerate}
\item[i.]   For sufficiently large primes $p$ there is a positive proportion of pairs $(t,a) \in (\mathbb{Z}/p)^* \times (\mathbb{Z}/p)$ for which the following is true.  There is a unique polynomial
$g_1(x,y) = b_1 x + b_2 y + b_3 = (d_1x + d_2 y + d_3)/p$ with the properties in Theorem \ref{thm:lin} for which $d_i \in \mathbb{Z}$ for $i = 1, 2, 3$ and $g.c.d.(d_1, d_2) = 1$ and $d_1 > 0$.
Alternative
(1) of Theorem \ref{thm:mainthm} holds for this $g_1(x,y)$ while alternative (2) of the Theorem does not.  
\item[ii.]  For sufficiently large primes $p$ there is a positive proportion   of pairs $(t,a) \in (\mathbb{Z}/p)^* \times (\mathbb{Z}/p) $, all the statements in (i) up to the last
one are true, but now alternative (2) of Theorem \ref{thm:mainthm} holds while alternative (1) does not.  
\end{enumerate} 
\end{thm}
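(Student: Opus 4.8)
The plan is to reduce the capacity $\gamma(\mathcal{E})$ of (\ref{eq:capequation}) to an explicit function of three ``shape'' parameters attached to $g_1$, and then to exhibit open families of shapes — each realized by a positive proportion of $(t,a)$ — on which this function is $>1$, respectively $<1$. For $F=\mathbb{Q}$, $\mathcal{J}=p\mathbb{Z}$, $X=Y=c\sqrt p$, write $g_1=(d_1x+d_2y+d_3)/p$ with $d_i\in\mathbb{Z}$. Properties (i) and (iii) of Theorem \ref{thm:lin} are equivalent to $d_2\equiv td_1$, $d_3\equiv ad_1\pmod p$ together with $d_1>0$, $|d_1|,|d_2|<\sqrt p/(3c)$, $|d_3|<p/3$; property (ii) then follows by the norm argument in the proof of Theorem \ref{thm:lin}. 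Unwinding the three inequalities defining $E_v$ in Definition \ref{def:definitions} at a finite place $v=\ell$, one checks using the congruences that the ``ideal'' inequality — and, when $\ell\nmid d_1$, also the second inequality — holds automatically, so $E_\ell=D(0,1)$ unless $\ell\mid d_1$, while $E_\ell=D(-d_3/d_2,|d_1|_\ell)$ when $\ell\mid d_1$ (here using $\gcd(d_1,d_2)=1$, which forces $\ell\nmid d_2$). Hence $\prod_{v\text{ finite}}\gamma_v(E_v)=\prod_{\ell\mid d_1}|d_1|_\ell=1/d_1$ by the product formula. At the archimedean place $E_\infty=D(0,c\sqrt p)\cap D\bigl(-d_3/d_2,(|d_1|/|d_2|)c\sqrt p\bigr)$, so Theorem \ref{thm:explicit} with $\xi=-d_3/d_2$ gives $\gamma_\infty(E_\infty)=(|d_3|/|d_2|)\,\gamma_\infty(V)$ with $V=D\bigl(0,c\sqrt p|d_2|/|d_3|\bigr)\cap D\bigl(1,c\sqrt p|d_1|/|d_3|\bigr)$. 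Setting $\alpha=d_1/\sqrt p$, $\beta=|d_2|/\sqrt p$, $\delta=|d_3|/p$, this yields
\[
\gamma(\mathcal{E})=\Phi(\alpha,\beta,\delta):=\frac{\delta}{\alpha\beta}\,\gamma_\infty\!\Bigl(D\bigl(0,\tfrac{c\beta}{\delta}\bigr)\cap D\bigl(1,\tfrac{c\alpha}{\delta}\bigr)\Bigr),
\]
a continuous function on $\{0<\alpha,\beta<1/(3c),\ 0<\delta<1/3\}$, computable from Theorem \ref{thm:explicit}.

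Next I would handle existence, uniqueness and shape control simultaneously. Fix $\epsilon_0>0$ with $\epsilon_0<\min\{1/\sqrt2,\ 3c/\sqrt2,\ 1/(3c)\}$ and put $B=\epsilon_0\sqrt p$; for $t\in(\mathbb{Z}/p)^*$ let $\Lambda_t=\mathbb{Z}(1,t)+\mathbb{Z}(0,p)$. If $\lambda_1(\Lambda_t)\le B$, then since the determinant of two vectors of $\Lambda_t$ of length $\le B$ is $\le B^2<p$ yet divisible by $p$, $\Lambda_t$ has a unique primitive vector $\pm(d_1,d_2)$ of length $\le B$; normalizing $d_1>0$, the map $t\mapsto(d_1,d_2)$ is a bijection onto $\{(d_1,d_2):d_1>0,\ d_2\ne0,\ \gcd(d_1,d_2)=1,\ d_1^2+d_2^2\le B^2\}$. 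Because the determinant of $(d_1,d_2)$ with any vector in the full box $|d_i'|<\sqrt p/(3c)$ is at most $\sqrt2\,B\cdot\sqrt p/(3c)<p$, the same $(d_1,d_2)$ is the unique valid primitive pair in that box, and for each such $t$ and each $a$ there is at most one $d_3\equiv ad_1\pmod p$ with $|d_3|<p/3$ (an interval shorter than $p$); so the $g_1$ of Theorem \ref{thm:lin} is unique where it exists. Counting primitive lattice points in $\sqrt p\cdot R$ shows that for any open $R$ inside $\{\alpha>0,\ \alpha^2+\beta^2<\epsilon_0^2\}$ the proportion of $t\in(\mathbb Z/p)^*$ whose shape $(\alpha,\beta)$ lies in $R$ is bounded below by a positive constant as $p\to\infty$; and for a fixed such $t$, as $a$ ranges over $\mathbb{Z}/p$ the residue $ad_1$ is traversed bijectively, so $\delta$ becomes asymptotically uniform on $(0,\tfrac13)$. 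Hence every open box $R\times(\delta_1,\delta_2)$ with $R$ as above and $(\delta_1,\delta_2)\subset(0,\tfrac13)$ is attained, with $g_1$ unique, by a positive proportion of $(t,a)$.

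It then remains to locate shapes with $\Phi>1$ and with $\Phi<1$. Near $(\alpha,\beta,\delta)=(c/4,\ c/2,\ c^2/8)$ one has $c\alpha/\delta=2$ and $c\beta/\delta=4$, so $D(1,2)\subset D(0,4)$, hence $V=D(1,2)$, $\gamma_\infty(V)=2$, and $\Phi=\tfrac{\delta}{\alpha\beta}\cdot2=2$; on a neighborhood the inclusion $D(1,c\alpha/\delta)\subset D(0,c\beta/\delta)$ persists, where $\Phi=c/\beta$ is robustly $>1$, and this shape lies in a reachable region for the choices above (since $c^2/8<\tfrac13$, $c/2<1/(3c)$, and $(c/4)^2+(c/2)^2<\epsilon_0^2$ for an admissible $\epsilon_0$). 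For $\Phi<1$ take $\alpha=\beta$ with $\delta$ slightly below $c(\alpha+\beta)=2c\alpha$, so $V=D(0,r)\cap D(1,r)$ with $r=c\alpha/\delta$ just above $1/2$; by Theorem \ref{thm:explicit} (which gives $\gamma_\infty(V)=0$ when $r+s=1$) and continuity, $\gamma_\infty(V)\to0$ as the lens degenerates, so $\Phi=\tfrac{\delta}{\alpha^2}\gamma_\infty(V)$ is $<1$ — indeed as small as we wish — on a suitable open set, for $\alpha$ chosen (depending on $c$) with $2c\alpha<\tfrac13$ and $2\alpha^2<\epsilon_0^2$. Finally, Theorem \ref{thm:mainthm}(1)--(2) turns $\gamma(\mathcal{E})>1$ and $\gamma(\mathcal{E})<1$ into the two alternatives of Theorem \ref{thm:mainthmearly}, while part (3) of Theorem \ref{thm:mainthm} (strict monotonicity in $X,Y$, with continuity) shows that only the expected alternative survives an arbitrarily small perturbation; combined with the previous step this gives parts (i) and (ii).

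I expect the crux to be the middle step: securing, for a positive proportion of $(t,a)$, that the auxiliary polynomial of Theorem \ref{thm:lin} is genuinely unique while simultaneously forcing its shape into a prescribed open set. The device of restricting to those $t$ for which $\Lambda_t$ has a vector shorter than $\epsilon_0\sqrt p$ — where a single determinant bound forces uniqueness both inside the ball and inside the full box, and the relevant count reduces to counting primitive lattice points in a disk — is what keeps everything elementary, with no equidistribution of Hecke points required. Making this compatible with the shape computations throughout $0<c<2/3$, where $\epsilon_0$ must be taken of size $\asymp c$ for small $c$ and the extremal shapes shrink accordingly, is the delicate part; the finite‑place reduction and the two lens computations are, by contrast, routine once Theorem \ref{thm:explicit} is in hand.
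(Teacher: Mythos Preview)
Your approach is correct and arrives at the same conclusion as the paper, but the route is genuinely different in all three of its main steps. For uniqueness, the paper (Lemmas~\ref{lem:adjust} and~\ref{lem:nicer}) fixes an explicit box $S(w,z)$ of triples $(d_1,d_2,d_3)$, builds the injective map to $(t,a)$, and then shows by a direct size computation that any competing $g_1$ must be an integer multiple of the given one; your determinant argument on $\Lambda_t$ is cleaner and more conceptual, and the trick of choosing $\epsilon_0<3c/\sqrt{2}$ so that the single short vector controls the entire box $|d_i'|<\sqrt p/(3c)$ is exactly what makes it work. For the capacity, the paper never invokes the lens formula of Theorem~\ref{thm:explicit}: for $\gamma(\mathcal{E})>1$ it lower-bounds $\gamma_\infty(E_\infty)$ by the capacity $\tfrac{\sqrt p}{4}(\delta_2-\delta_1)$ of the real segment $E_\infty\cap\mathbb{R}$ (Lemma~\ref{lem:capcomp}), while for $\gamma(\mathcal{E})<1$ it drives $\delta_2<\delta_1$ so that $E_\infty=\emptyset$ and $\gamma(\mathcal{E})=0$ outright. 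Your choices---one disk containing the other for $\Phi>1$, and a degenerating lens for $\Phi<1$---use Theorem~\ref{thm:explicit} more directly and are arguably more natural given that the formula is available; the paper's empty-set maneuver has the minor advantage of not needing continuity of the transfinite diameter. One small point worth making explicit in your write-up: the interval $|d_3|<p/3$ has length $2p/3<p$, so it meets a given residue class in \emph{at most} one integer but possibly none; thus existence of $g_1$ holds only for about $2/3$ of the $a$'s, which is still a positive proportion and is all the theorem requires.
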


We will first describe the strategy of the proof.  Rather than begin by choosing $(t,a)$, we instead choose polynomials $b_1 x + b_2 y + b_3 = (d_1 x + d_2 y + d_3)/p$ that will have the properties in Lemma \ref{lem:adjust} for some pair of residue classes $t$ and $a$ mod $p\mathbb{Z}$.  In Lemma \ref{lem:adjust} we specify conditions on the $d_i$ which ensure the uniqueness of the associated pair $(t,a)$ mod $p\mathbb{Z}$.  In Lemma \ref{lem:nicer} we show that for the $t$ and $a$ which arise from the construction in Lemma \ref{lem:adjust}, any polynomial with the properties in Theorem \ref{thm:lin} for $t$ and $a$ must arise from an integer multiple of the polynomial  in Lemma \ref{lem:adjust} that gives rise to $t$ and $a$.  These steps are needed because in the end we will count the polynomials produced in Lemma \ref{lem:adjust} which lead to the capacity $\gamma(\mathcal{E})$ associated to the triple $\{b_1 x + b_2 y + b_3, t, a\}$   in Definition \ref{def:definitions}  and Lemma \ref{lem:answers} being greater than $1$ (respectively less than $1$). We compute these capacities in Lemma \ref{lem:capcomp}.  To complete the proof of Theorem \ref{thm:propexamples} we then 
produce ranges for $d_1$, $d_2$ and $d_3$ which lead to $\gamma(\mathcal{E}) > 1$ (respectively $\gamma(\mathcal{E}) < 1$). We  show that these
ranges lead to positive proportion of the possible residue classes of $(t,a)$ mod $p$  satisfying conditions (i) (resp. (ii)) of Theorem \ref{thm:propexamples}.

\begin{lemma}
\label{lem:adjust} Recall that $2/3 > c > 0$, so $0 < 3c/2 < 1$.   
Let $w, z$ be real numbers such that $0 < w \le 2$ and $0 \le z < 1$.  Consider the set $S(w,z)$ of all triples $(d_1, d_2, d_3)$ of integers such that 
\begin{equation}
\label{eq:dright}
w 3c\sqrt{p}/4 \le d_1 \le 3c \sqrt{p}/2 < \sqrt{p}; \quad 1 \le d_2 \le 3c \sqrt{p}/2 < \sqrt{p}; \quad 0 \le d_3 < z p \quad \mathrm{and} \quad \mathrm{gcd}(d_1,d_2) = 1.
\end{equation}
There is an injective map $\lambda:S(w,z) \to \mathbb{Z}/p \times \mathbb{Z}/p$ which sends $(d_1,d_2,d_3)$ to the class in $\mathbb{Z}/p \times \mathbb{Z}/p$
of $(t,a)$ when 
$t$ and $a$ are the smallest non-negative integers for which $d_1 \cdot (1, t, a) \equiv (d_1, d_2, d_3)$ mod $p$.   One has $t \not \equiv 0 \not \equiv d_1$ mod $p$.  
\end{lemma}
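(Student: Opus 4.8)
The plan is to route everything through the inverse of $d_1$ modulo $p$. First I would observe that for any triple in $S(w,z)$ the bound $0 < d_1 \le 3c\sqrt{p}/2 < \sqrt{p} < p$ shows $d_1$ is a unit modulo $p$, so $\lambda$ is well defined and $d_1 \not\equiv 0 \pmod p$; explicitly, unpacking $d_1\cdot(1,t,a)\equiv(d_1,d_2,d_3)\pmod p$ gives the unique solution $t\equiv d_1^{-1}d_2$, $a\equiv d_1^{-1}d_3 \pmod p$. Since $1\le d_2 < \sqrt{p} < p$, the residue $d_2$ is nonzero, hence $t\equiv d_1^{-1}d_2\not\equiv 0\pmod p$, which settles the last sentence of the statement.

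For injectivity I would take two triples $(d_1,d_2,d_3)$ and $(d_1',d_2',d_3')$ in $S(w,z)$ with the same image, so that $d_1^{-1}(d_2,d_3)\equiv (d_1')^{-1}(d_2',d_3')\pmod p$, and clear denominators to obtain $d_1'd_2\equiv d_1 d_2'\pmod p$ and $d_1'd_3\equiv d_1 d_3'\pmod p$. The crucial step is a size estimate: because $c<2/3$ one has $9c^2/4<1$, so $d_1'd_2$ and $d_1 d_2'$ are nonnegative integers both strictly smaller than $(3c\sqrt{p}/2)^2=(9c^2/4)p<p$, and therefore the first congruence upgrades to the genuine equality $d_1'd_2=d_1 d_2'$.

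From there the coprimality hypotheses finish the argument: $\gcd(d_1,d_2)=1$ together with $d_1\mid d_1'd_2$ forces $d_1\mid d_1'$, and symmetrically $d_1'\mid d_1$; as both are positive, $d_1=d_1'$, and then $d_2=d_2'$. Finally $d_1 d_3\equiv d_1 d_3'\pmod p$ with $d_1$ a unit gives $d_3\equiv d_3'\pmod p$, and $0\le d_3,d_3'<zp<p$ forces $d_3=d_3'$; hence the two triples coincide and $\lambda$ is injective. The only point requiring any care is the estimate $d_1 d_2'<p$, which is precisely where the hypothesis $c<2/3$ enters; the parameters $w$ and $z$ are irrelevant to injectivity and appear only to record the constraints needed in the subsequent lemmas.
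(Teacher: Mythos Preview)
Your proof is correct and follows essentially the same route as the paper's: both arguments observe that $d_1,d_2$ are units modulo $p$ because they lie in $(0,\sqrt{p})$, derive the cross relation $d_1'd_2\equiv d_1d_2'\pmod p$, promote it to an equality via the size bound $(3c\sqrt{p}/2)^2<p$ coming from $c<2/3$, use the coprimality of $(d_1,d_2)$ and $(d_1',d_2')$ to force $(d_1,d_2)=(d_1',d_2')$, and finish with $d_3=d_3'$ using $z<1$. Your presentation is slightly more explicit about inverting $d_1$ and about where exactly $c<2/3$ enters, but the substance is identical.
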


\begin{proof} Suppose $(d_1, d_2, d_3) \in S(w,z)$.  Since $0 < w3c/2 < w $ we have $d_1 \not \equiv 0 \not \equiv d_2$ mod $p$.  Hence $t$ and $a$ are uniquely determined by $(d_1,d_2,d_3)$ and $t \not \equiv 0$ mod $p$.  Suppose $\lambda(d_1,d_2,d_3) = \lambda(d'_1,d'_2,d'_3)$ for some $(d'_1,d'_2, d'_3) \in S(w,z)$.  Then $d_2 \equiv t d_1$ mod $p$
and $d'_2 \equiv t d'_1$ mod $p$, so $d_2 d_1' - d'_2 d_1 \equiv t d_1 d'_1 - t d'_1 d_1 \equiv 0$ mod $p$.    But $d_1, d_2, d'_1, d'_2$ all lie in the interval 
$(0,3c\sqrt{p}/2) \subset (0,\sqrt{p})$, we have $0 < d_2 d_1' < p$ and $0 < d'_2 d_1< p$.  So $d_2 d_1' \equiv d'_2 d_1$ mod $p$ forces $d_2 d_1' = d'_2 d_1$.  Now since the $d_i$ are positive and $\mathrm{g.c.d}(d_1,d_2) = \mathrm{g.c.d}(d'_1,d'_2)$, we conclude $(d_1,d_2) = (d'_1,d'_2)$.  Now $d_3 \equiv d_1 a \equiv d'_1 a \equiv d'_3$ mod $p$, so $z < 1$ forces $d_3 = d'_3$.
Hence $\lambda$ is injective.
\end{proof}

\begin{lemma}
\label{lem:nicer}  With the notations and assumptions of Lemma \ref{lem:adjust}, suppose $\lambda(d_1,d_2,d_3) = (1,t, a)$ and $z \le 3wc^2$.   Suppose $g_1(x,y) = b_1 x + b_2y + b_3 \in \mathbb{Q}[x,y]$ has the properties in Theorem \ref{thm:lin}
for $t$ and $a$.  Then $e_1 = pb_1$, $e_2  = pb_2$ and $e_3 = p b_3$ are in $ \mathbb{Z}$.   Let $q = \mathrm{g.c.d.}(e_1,e_2,e_3)$.   Then $b_1 \ne 0$ and 
$$g_1(x,y) = \mathrm{sign}(b_1) \cdot  q \cdot (d_1x + d_2 y + d_3)/p.$$
Furthermore, $(d_1x + d_2 y + d_3)/p$ also has all the properties in Theorem \ref{thm:lin} for $t$ and $a$.
\end{lemma}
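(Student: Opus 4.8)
The plan is to pass to the integer coefficients $e_i=pb_i$ and work modulo $p$. Since $g_1\in\mathcal J^{-1}O_F[x,y]=\tfrac1p\mathbb Z[x,y]$, the $e_i$ are automatically integers, which is the first assertion. With $X=Y=c\sqrt p$, property (i) of Theorem~\ref{thm:lin} reads $|e_1|,|e_2|<\sqrt p/(3c)$ and $|e_3|<p/3$, and property (iii) gives $e_1\ne 0$ together with the congruences $e_2\equiv e_1t$ and $e_3\equiv e_1a\pmod p$. On the other side $\lambda(d_1,d_2,d_3)=(1,t,a)$ means $d_2\equiv d_1t$ and $d_3\equiv d_1a\pmod p$, while membership in $S(w,z)$ supplies $1\le d_1,d_2\le 3c\sqrt p/2$, $d_1\ge 3wc\sqrt p/4$, $0\le d_3<zp$ and $\gcd(d_1,d_2)=1$.

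First I would pin down the linear coefficients. Combining $e_2\equiv e_1t$ with $d_2\equiv d_1t$ gives $e_2d_1\equiv e_1d_2\pmod p$; since $|e_2d_1|,|e_1d_2|<(\sqrt p/(3c))\cdot(3c\sqrt p/2)=p/2$, we get $|e_2d_1-e_1d_2|<p$, hence $e_2d_1=e_1d_2$. As $\gcd(d_1,d_2)=1$ this forces $e_1=md_1$ and $e_2=md_2$ for an integer $m$, with $m\ne 0$ because $e_1\ne 0$.

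The crux is to get $e_3=md_3$ as well. Reducing $e_3\equiv e_1a\pmod p$ and using $e_1=md_1$ together with $d_3\equiv d_1a$ yields $e_3\equiv md_3\pmod p$, so $e_3=md_3+kp$ and one must show $k=0$. Here I would argue by cases on $\mathrm{sign}(m)$, exploiting that $d_3\ge 0$ breaks the symmetry: if $m>0$ then $md_3\ge 0$, so $k\ge 1$ is impossible (it would make $|e_3|>p/3$) and $k\le -1$ would force $md_3=e_3-kp>-p/3+p=2p/3$; on the other hand $md_3=(e_1/d_1)\,d_3<\tfrac{4zp}{9wc^2}$ from the bounds $e_1<\sqrt p/(3c)$, $d_1\ge 3wc\sqrt p/4$ and $d_3<zp$, and the hypothesis $z\le 3wc^2$ then makes $md_3>2p/3$ impossible; the case $m<0$ is mirror-symmetric (now $md_3\le 0$ and $k\ge 1$ is the dangerous case). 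This quantitative squeeze — where the hypothesis relating $w,z,c$ is actually consumed, and where the one-sided constraint $d_3\ge 0$ must cut in the right direction — is the step I expect to be the main obstacle.

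Granting $(e_1,e_2,e_3)=m(d_1,d_2,d_3)$, the remaining claims are bookkeeping. Then $q=\gcd(e_1,e_2,e_3)=|m|\gcd(d_1,d_2,d_3)=|m|$ since $\gcd(d_1,d_2)=1$, and $\mathrm{sign}(b_1)=\mathrm{sign}(e_1)=\mathrm{sign}(m)$ since $d_1>0$, so $g_1=\tfrac1p(e_1x+e_2y+e_3)=\mathrm{sign}(b_1)\cdot q\cdot(d_1x+d_2y+d_3)/p$ and $b_1=md_1/p\ne 0$. Finally $(d_1x+d_2y+d_3)/p=\mathrm{sign}(b_1)\,q^{-1}g_1\in\tfrac1p\mathbb Z[x,y]$; its coefficients $d_i/p=|b_i|/q\le|b_i|$ satisfy the strict inequalities of Theorem~\ref{thm:lin}(i) because $g_1$ does and $q\ge 1$; condition (iii) for it holds since $d_1\ne 0$ and the congruences $d_2\equiv d_1t$, $d_3\equiv d_1a\pmod p$ come from the definition of $\lambda$; and condition (ii) holds because $(d_1x+d_2y+d_3)/p$ is a nonzero scalar multiple of $g_1$ and so vanishes wherever $g_1$ does (equivalently, (ii) follows from (i) and (iii) by the norm argument in the proof of Theorem~\ref{thm:lin}). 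Hence $(d_1x+d_2y+d_3)/p$ has all the properties in Theorem~\ref{thm:lin} for $t$ and $a$.
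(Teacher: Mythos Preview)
Your approach is essentially the paper's: pass to $e_i=pb_i\in\mathbb Z$, use the congruences and the size bounds to force $e_1d_2=e_2d_1$, write $(e_1,e_2)=m(d_1,d_2)$ via $\gcd(d_1,d_2)=1$, reduce $e_3\equiv md_3\pmod p$, and then squeeze to get $e_3=md_3$. The paper normalizes first (dividing by $\gcd(e_1,e_2,e_3)$ and fixing sign) and then shows the resulting multiplier equals $1$, whereas you keep $m$ general and argue by cases on $\mathrm{sign}(m)$; these are equivalent in spirit.

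There is, however, a numerical gap in your squeeze step. From $e_1<\sqrt p/(3c)$, $d_1\ge 3wc\sqrt p/4$, and $d_3<zp$ you correctly get $md_3<\tfrac{4zp}{9wc^2}$, but under the hypothesis $z\le 3wc^2$ this only yields $md_3<\tfrac{4p}{3}$, which does \emph{not} contradict $md_3>2p/3$. So the case $m>0$, $k=-1$ (and symmetrically $m<0$, $k=1$) is not excluded by your argument as written. The sign-splitting does not buy you anything beyond the straight triangle inequality $|e_3-md_3|\le|e_3|+|md_3|<p/3+|md_3|$; in either formulation you need $|md_3|<2p/3$.

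The paper's own proof closes this by using the lower bound $d_1\ge w\cdot 3c\sqrt p/2$ (rather than $3wc\sqrt p/4$), which gives $|m|<\tfrac{2}{9wc^2}$ and hence $|md_3|<\tfrac{2zp}{9wc^2}\le\tfrac{2p}{3}$. This is a factor-of-two discrepancy between the displayed inequality in the statement of Lemma~\ref{lem:adjust} and what is actually invoked in the proof of Lemma~\ref{lem:nicer}; with the tighter lower bound your argument goes through verbatim. If you stick with $d_1\ge 3wc\sqrt p/4$, you would need to strengthen the hypothesis to $z\le \tfrac{3}{2}wc^2$ (which still suffices for the later application).
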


\begin{proof}    By Theorem \ref{thm:lin}, 
$0 \ne p g_1(x,y) = e_1x + e_2 y + e_3 \in \mathbb{Z}[x,y]$ has $e_1 x + e_2 y + e_3 \equiv e_1(x + ty + a)$ mod $pZ[x,y]$, so $(e_1, e_2, e_3) \equiv e_1(1,t,a)$ mod $p$.  Furthermore, \begin{equation}
\label{eq:eright}
0 < |e_1| < p/(3X) = \sqrt{p}/(3c); \quad |e_2| < p/(3Y) = \sqrt{p}/(3c); \quad \mathrm{and} \quad
|e_3| < p/3.
\end{equation}   
These properties are preserved if we divide $g_1(x,y)$ by the g.c.d. of $e_1, e_2$ and $e_3$ or if we multiply $g_1(x,y)$ by $-1$.  We will assume in what follows that this has been done, so that  $\mathrm{g.c.d}(e_1,e_2,e_3) = 1$ and $e_1 > 0$.  

Since $\lambda(d_1,d_2,d_3) = (t,a)$, we have 
$d_1 (1,t,a) \equiv (d_1,d_2,d_3)$ mod $p$.  Since $(e_1, e_2, e_3) \equiv e_1(1,t,a)$ mod $p$ we conclude $e_1 d_2 - d_1 e_2 \equiv 0$ mod $p$.  However,
$$|e_1 d_2 - d_1 e_2| <  p   $$
since $|e_i| < \sqrt{p}/(3c) $ and $|d_i| \le 3c\sqrt{p}/2$ for $i = 1, 2$.
Hence $e_1 d_2 - d_1 e_2  = 0$.  Since $\mathrm{gcd}(d_1,d_2) = 1$, this forces
$(e_1,e_2) = e(d_1,d_2)$ for some $e \in \mathbb{Z}$.  
Now $w3c/2 \sqrt{p} \le d_1$ so $|e| w3c \sqrt{p}/2 \le |ed_1| = |e_1| < \sqrt{p}/(3c)$ because of (\ref{eq:eright}).
So 
\begin{equation}
\label{eq:ebound}
|e| <  \frac{2}{(3c)^2 w}.
\end{equation}

Since $d_1 \not \equiv 0$ mod $p$, $d_1 d_1' \equiv 1$ mod $p$ for some integer $d_1'$. Therefore 
$$(ed_1,ed_2, e_3) = (e_1,e_2,e_3)\equiv e_1(1,t,a) \equiv e_1 d_1' (d_1,d_2, d_3) \quad \mathrm{mod} \ p.$$
 So $e\equiv e_1 d_1'$
mod $p$ and $e_3 \equiv e_1 d_1' d_3 \equiv e d_3$ mod $p$.  Thus $(e_1,e_2,e_3) - e(d_1,d_2,d_3) \equiv (0,0,0)$ mod
$p$, where $e_1 = e d_1$ and $e_2 = e d_2$. We conclude that $$p g_1(x,y) = e_1 x + e_2 y + e_3 = e (d_1 x + d_2 y + d_3) + h$$
when $h= e_3 - e d_3 \equiv 0$ mod $p$.  Here  
$$|e d_3| = |e| \cdot |d_3| < \frac{2zp}{(3c)^2w} \le 2p/3$$
because of (\ref{eq:ebound}), $|d_3| < zp$ and the assumption that $z \le w3c^2$.  Since $|e_3| < p/3$ we get 
$|h|  = | e_3 - e d_3 | < p/3 + 2p/3 < p$, which forces $h = 0$ since $h \equiv 0$ mod $p$.  Thus $pg_1(x,y) = e_1 x + e_2 y + e_3 = e(d_1 x + d_2 y + d_3)$. Since $\mathrm{g.c.d}(e_1,e_2,e_3) = 1$
and $e_1$ and $d_1$ are positive, we must have $e = 1$ and $g_1(x,y) =  (d_1 x + d_2 y + d_3)/p$.
\end{proof}

\begin{lemma}
\label{lem:capcomp}  Let $\mathcal{E} = \prod_v E_v$ be the adelic set associated by Definition \ref{def:definitions} to \hbox{$g_1(x,y) = \frac{1}{p}(d_1 x + d_2 y + d_3)$} for some $(d_1,d_2,d_3) \in S(w,z)$ 
under the assumption that $z \le 3wc^2$ in Lemma \ref{lem:nicer}.  Define
\begin{equation}
\label{eq:deldefs}
\delta_1 = \mathrm{max}(-c,\frac{-d_1 c}{d_2} - \frac{d_3}{\sqrt{p} d_2})\quad \mathrm{and}\quad \delta_2 =   \mathrm{min}(c,\frac{d_1 c}{d_2} - \frac{d_3}{\sqrt{p} d_2})
\end{equation}
Suppose $d_1$ and $d_2$ are relatively prime.  The capacity $\gamma(\mathcal{E})$ of $\mathcal{E}$ is $0$ if $\delta_1 > \delta_2$.  Otherwise,  
\begin{equation}
\gamma(\mathcal{E}) = \prod_v \gamma_v(E_v) = d_1^{-1} \gamma_\infty(E_\infty) \ge  \frac{\sqrt{p} (\delta_2 - \delta_1)}{4d_1}
\end{equation}
where $E_\infty$ is the component of $\mathcal{E}$ at the archimedean place of $\mathbb{Q}$.
\end{lemma}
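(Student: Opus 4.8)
The plan is to compute $\gamma(\mathcal E)=\prod_v\gamma_v(E_v)$ by treating the finite places and the archimedean place separately. Since $F=\mathbb Q$, Lemma~\ref{lem:answers} gives $\gamma_\ell(E_\ell)=r_\ell$ for a finite place $\ell$, where $r_\ell$ is the radius of the disk $E_\ell$ (and $\gamma_\ell(E_\ell)=0$ if $E_\ell$ is empty or a point), so for the finite part it is enough to identify each disk $E_\ell$ explicitly from Definition~\ref{def:definitions}(i).

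For the finite places I would substitute $b_i=d_i/p$, use the normalizations $|p|_\ell=1$ for $\ell\ne p$ and $|p|_p=1/p$, and note that $-(b_2y+b_3)/b_1=-(d_2y+d_3)/d_1$, so the third defining inequality reads $|((d_1t-d_2)y+(d_1a-d_3))/d_1|_\ell\le|\mathcal J|_\ell$. I would then go through three cases. \emph{If $\ell\nmid d_1$ and $\ell\ne p$:} the inequality $|y|_\ell\le1$ by itself forces $|d_2y+d_3|_\ell\le1=|d_1|_\ell$ and makes $-(d_2y+d_3)/d_1$, hence also $-(b_2y+b_3)/b_1+ty+a$, $\ell$-integral, so all three inequalities collapse to $|y|_\ell\le1$ and $E_\ell=D(0,1)$, $\gamma_\ell(E_\ell)=1$. \emph{If $\ell=p$:} again $|y|_p\le1$ makes the second inequality automatic, and since $d_1(1,t,a)\equiv(d_1,d_2,d_3)\pmod p$ we have $d_1t-d_2\equiv0$ and $d_1a-d_3\equiv0\pmod p$, so the numerator $(d_1t-d_2)y+(d_1a-d_3)$ lies in $p\mathbb Z_p$ whenever $y\in\mathbb Z_p$, which is exactly the third inequality with $|\mathcal J|_p=1/p$; thus $E_p=D(0,1)$ and $\gamma_p(E_p)=1$. \emph{If $\ell\mid d_1$} (so $\ell\nmid d_2$ by coprimality, and $\ell\ne p$ since $0<d_1<\sqrt{p}$): the second inequality becomes $|y+d_3/d_2|_\ell\le|d_1|_\ell<1$, i.e.\ $y\in D(-d_3/d_2,\,|d_1|_\ell)\subseteq D(0,1)$, and on this disk $-(d_2y+d_3)/d_1$ is $\ell$-integral so the third inequality again holds automatically; hence $E_\ell=D(-d_3/d_2,\,|d_1|_\ell)$ and $\gamma_\ell(E_\ell)=|d_1|_\ell$. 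Multiplying over all finite places and using the product formula for the positive integer $d_1$ gives $\prod_{v\ \mathrm{finite}}\gamma_v(E_v)=\prod_\ell|d_1|_\ell=1/d_1$.

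For the archimedean place, Definition~\ref{def:definitions}(ii) with $b_i=d_i/p$ and $X=Y=c\sqrt{p}$ yields $E_\infty=D(0,c\sqrt{p})\cap D(-d_3/d_2,\,d_1c\sqrt{p}/d_2)$, an intersection of two disks with real centers; intersecting it with $\mathbb R$ gives precisely the interval $[\sqrt{p}\,\delta_1,\sqrt{p}\,\delta_2]$ in the notation of (\ref{eq:deldefs}). If $\delta_1>\delta_2$ this interval is empty, and since $E_\infty$ is convex and invariant under complex conjugation (the two centers being real), $E_\infty\cap\mathbb R=\emptyset$ forces $E_\infty=\emptyset$, so $\gamma_\infty(E_\infty)=0$ and $\gamma(\mathcal E)=0$. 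If $\delta_2\ge\delta_1$, then $E_\infty$ contains the real interval $[\sqrt{p}\,\delta_1,\sqrt{p}\,\delta_2]$ of length $\sqrt{p}(\delta_2-\delta_1)$, so by monotonicity of the transfinite diameter together with the classical value $L/4$ for an interval of length $L$, we get $\gamma_\infty(E_\infty)\ge\sqrt{p}(\delta_2-\delta_1)/4$. Combining this with the finite part gives $\gamma(\mathcal E)=d_1^{-1}\gamma_\infty(E_\infty)\ge\sqrt{p}(\delta_2-\delta_1)/(4d_1)$, as claimed.

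The step I expect to be the main obstacle is the finite-place case analysis, specifically verifying that the congruences $d_1(1,t,a)\equiv(d_1,d_2,d_3)\pmod p$ render the third inequality in Definition~\ref{def:definitions}(i) vacuous at $v=p$, and that at the primes dividing $d_1$ that same inequality imposes nothing beyond the disk already cut out by the first two. Once the local disks are pinned down, the product-formula collapse of the finite part and the reduction of the archimedean computation to the transfinite diameter of a real interval are routine, and in particular the explicit formula of Theorem~\ref{thm:explicit} is not needed for the stated lower bound.
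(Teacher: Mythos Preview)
Your argument is correct and follows essentially the same approach as the paper: the paper likewise shows that at every finite place $v$ the set $E_v$ is a disk of radius $|d_1|_v$ (first proving that conditions (i)--(ii) imply (iii), then that the stronger of (i),(ii) depends on whether $|d_1|_v=1$), collapses the finite product via the product formula to $d_1^{-1}$, and bounds $\gamma_\infty(E_\infty)$ below by the capacity $\sqrt{p}(\delta_2-\delta_1)/4$ of the real interval $[\sqrt{p}\,\delta_1,\sqrt{p}\,\delta_2]$. Your three-case split at the finite places is just a reorganization of the same computation, and your convexity/conjugation-symmetry justification for why $E_\infty=\emptyset$ when $\delta_1>\delta_2$ is slightly more explicit than the paper's.
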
 

\begin{proof}  From Definition \ref{def:definitions}, the intersection of the real line with $E_\infty$ is trivial if $\delta_1 > \delta_2$, in which case $E_\infty$ is empty and $\gamma_\infty(E_\infty) = 0 = \gamma(\mathcal{E})$.
If $\delta_1  \le \delta_2$ then $E_\infty$ intersects the real line in the interval $[\sqrt{p}\delta_1,\sqrt{p}\delta_2]$.  In this case, we have
\begin{equation}
\label{eq:infbound}
\gamma_\infty(E_\infty) \ge \gamma_{\infty}([\sqrt{p}\delta_1,\sqrt{p}\delta_2]) = \frac{\sqrt{p}}{4}(\delta_2 - \delta_1).
\end{equation}

Suppose now that $v$ is a finite place of $\mathbb{Q}$. From Definition \ref{def:definitions} and the equality $b_1 x + b_2 y + b_3  = p^{-1} (d_1 x + d_2 x + d_3)$, we see that $E_v$ is the set of $y \in \overline{\mathbb{Q}}_v$ satisfying these conditions:
\begin{enumerate}
\item[i.]  $|y|_v \le 1$
\item[ii.] $|(d_2 y + d_3)/d_1|_v \le 1$ and 
\item[iii.] $|-(d_2y +d_3)/d_1 +ty + a|_v \le |\mathcal{J}|_v$
\end{enumerate}
where $|\mathcal{J}|_v = 1$ if $v \ne p$ and $|\mathcal{J}|_p = p^{-1}$.  Let us first show (i) and (ii) imply (iii).  If $v \ne p$, this is clear from $t, a \in \mathbb{Z}$.  Suppose now that $v = p$.  We know $d_1$ is prime to $p$, and that $(d_1,d_2,d_3) \equiv d_1(1, t, a)$ mod $p$.  So
$d_2 \equiv d_1 t$ mod $p$ and $d_3 \equiv d_1 a$ mod $p$.  Thus $$|(-d_2/d_1 + t) y|_v = |d_1|_v^{-1} \cdot |-d_2 + t d_1|_v \cdot |y|_v \le p^{-1} \quad 
\mathrm{if}\quad  |y|_v \le 1$$
and $$|-d_3/d_1 + a|_v = |d_1|^{-1} \cdot |-d_3 + a d_1|_v \le p^{-1} \quad 
\mathrm{if}\quad  |y|_v \le 1$$ so (iii) is implied by (i) when $v = p$.  Thus $E_v$
is the set of $y \in \overline{\mathbb{Q}}_v$  satisfying (i) and (ii).  

Recall now that $d_1, d_2, d_3$ are non-zero integers and that by assumption 
$g.c.d.(d_1,d_2) = 1$.  Thus if $|d_1|_v < 1$ then $|d_2|_v = 1$, and otherwise $|d_1|_v = 1$.  If $|d_1|_v = 1$,
then (i) implies $|(d_2 y + d_3)/d_1|_v = |d_2 y + d_3|_v \le 1$, so (ii) holds.  If $|d_1|_v < 1$, then $|d_2|_v = 1$ so (ii) is equivalent to 
$|y - (-d_3/d_2)|_v \le |d_1/d_2|_v = |d_1|_v < 1$.  Since $|-d_3/d_2|_v = |-d_3|_v \le 1$, condition (ii) implies (i) if $|d_1|_v < 1$.  We thus
find that for all finite $v$, $E_v$ is a $v$-adic disc of radius $r_v = |d_1|_v$ around point of $\mathbb{Q}$.  

We can now calculate the capacity of $\mathcal{E} = \prod_v E_v$.  The product of the local capacities at finite places is
$$\prod_{v \ \mathrm{finite}} \gamma_v(E_v) = \prod_{v \ \mathrm{finite}} r_v = \prod_{v \ \mathrm{finite}} |d_1|_v = d_1^{-1}$$
by the product formula since $d_1$ is a positive integer.  We thus find  as in Lemma \ref{lem:answers} and (\ref{eq:infbound}) that 
$$
\gamma(\mathcal{E}) = \prod_v \gamma_v(E_v) = d_1^{-1} \gamma_\infty(E_\infty) \ge \frac{\sqrt{p}(\delta_2 - \delta_1)}{4 d_1}.
$$\end{proof}
\noindent {\bf Proof of Theorem \ref{thm:propexamples}}
\medbreak

Let $d_1 = 3c \chi_1 \sqrt{p}/2$, $d_2 = 3c \chi_2 \sqrt{p}/2$ and $d_3 = \chi_3 p$ be as in Lemma \ref{lem:nicer}, so that $w \le \chi_1 \le 1$, $0 < \chi_2 \le 1$
and $0 \le \chi_3 \le z$.  We have supposed $0 < w$, $0 < 3c/2 < 1$, $0 \le z < 1$ and $z \le 3wc^2$.    For $i = 1, 2$ let 
$$ z_i(\chi_1,\chi_2,\chi_3) = \frac{1}{6c\chi_1}  \cdot (\frac{(-1)^i\chi_1c}{\chi_2} - \frac{2\chi_3}{3c\chi_2})  = \frac{(-1)^i c}{6c\chi_2} - \frac{\chi_3}{9c^2\chi_1 \chi_2}.$$ 
Then 
\begin{equation}
\label{eq:delt2comp} \frac{\sqrt{p}}{4 d_1} \delta_2 = \frac{\delta_2}{6c \chi_1} = \mathrm{min}(\frac{c}{6c\chi_1},z_2(\chi_1,\chi_2,\chi_3)) = \mathrm{min}(\frac{1}{6\chi_1},\frac{1}{6 \chi_2} - \frac{\chi_3}{9c^2\chi_1 \chi_2})
  \end{equation}
\begin{equation}
\label{eq:delt1comp} \frac{\sqrt{p}}{4 d_1} \delta_1 =  \frac{\delta_1}{6c \chi_1} = \mathrm{max}(-\frac{c}{6c\chi_1},z_1(\chi_1,\chi_2,\chi_3)) = \mathrm{max}(\frac{-1}{6\chi_1},\frac{-1 }{6 \chi_2} - \frac{\chi_3}{9c^2\chi_1 \chi_2})
\end{equation}

Suppose we let $w = 1/24$ and $z = 9c^2/(24)^2 < 9(2/3)^2/(24)^2 < 1$.  Then $z \le 3wc^2 = c^2/8$, so all conditions on $c,w$ and $z$ are satisfied.  Suppose  $\chi_1, \chi_2$ are in the interval $[1/24,1/12]$ and $\chi_3$ is in the interval $[0, z]$.  Then
$$\frac{\sqrt{p}}{4 d_1} \delta_2  =  \mathrm{min}(\frac{1}{6\chi_1},\frac{1}{6 \chi_2} - \frac{\chi_3}{9c^2\chi_1 \chi_2}) \ge \mathrm{min}(2,2 - 1) \ge 1.$$
Since $\frac{\sqrt{p}}{4 d_1} \delta_1 < 0$, we will then have
$$\gamma(\mathcal(E)) \ge \frac{\sqrt{p}(\delta_2 - \delta_1)}{4 d_1} > 1.$$
Suppose $I_1$ and $I_2$ are intervals on the non-negative real axis of lengths $q_1, q_2 > 0$.  
By a sieving argument, as $r \to +\infty$, the number of coprime integers $(d_1,d_2)$ in a product $r I_1 \times rI_2$ is asymptotically $r^2 q_1 q_2 \prod_{\ell \ \mathrm{prime}} (1 - \ell^{-2}) = r^2 q_1 q_2 6/\pi^2$.  Applying this to $I_1 = [w3c\sqrt{p}/2,3c\sqrt{p}/2]$ and $I_2 = [0,3c\sqrt{p}/2]$ for  $w = 1/12$ as above, we see that as $p \to \infty$, the number of 
triples $(d_1,d_2,d_3) \in S(w,z)$ that have $d_1$ and $d_2$ coprime and  $\gamma(\mathcal{E}) > 1$ is bounded below by a positive constant times $\sqrt{p}^2\cdot p = p^2$.  Since the number of elements of $S(w,z)$ grows as a positive constant times $p^2$, Lemmas  \ref{lem:adjust} and \ref{lem:nicer} show that a positive fraction of all pairs $(t,a) \in (\mathbb{Z}/p)^* \times (\mathbb{Z}/p)^*$ lead to $\gamma(\mathcal{E}) > 1$.  

To prove that there are a positive proportion of pairs $(t,a)$ such that $\gamma(\mathcal{E}) = 0$, note that (\ref{eq:delt2comp}) and (\ref{eq:delt1comp}) give
$$\frac{\chi_1 \chi_2 \delta_2}{6c} \le  \frac{\chi_1}{6 } - \frac{\chi_3}{9c^2} \quad \mathrm{and}\quad \frac{ \chi_1 \chi_2 \delta_1}{6c} \ge -\frac{\chi_2}{6}$$
so 
$$\frac{\chi_1\chi_2}{6c} (\delta_2 - \delta_1) < \frac{\chi_1}{6 } - \frac{\chi_3}{9c^2}  + \frac{\chi_2}{6}.$$
The constraints on $\chi_1$, $\chi_2$ and $\chi_3$ are that $w \le \chi_1 \le 1$, $0 < \chi_2  \le 1$
and $0 \le \chi_3 \le z$, where $0 < w < 1$, $0 < 3c/2 < 1$, $0 \le z < 1$ and $z \le 3wc^2$. We now assume $0 < w < 1/2$.  Let $\chi_2 \to 0^+$, $\chi_1 \to w^+$, $\chi_3 \to z^-$ and $z = 3wc^2 < 3c^2/2 < c < 2/3$. Then
$\frac{\chi_1}{6 } - \frac{\chi_3}{9c^2}  + \frac{\chi_2}{6}$ has limit
$$\frac{w}{6} - \frac{z}{9c^2} + 0 = w \cdot (\frac{1}{6} - \frac{3c^2}{9 c^2}) =  - \frac{w}{6} < 0.$$
Thus taking $\chi_1$, $\chi_2$ and $\chi_3$ near these limits leads (via the same seiving argument used before) to a positive proportion of $(t,a) \in  (\mathbb{Z}/p)^* \times (\mathbb{Z}/p)^*$  for which $\delta_2 < \delta_1$.  
Lemma \ref{lem:capcomp}  shows $\gamma(\mathcal{E}) = 0$ for such $(t,a)$. 

Theorem \ref{thm:mainthm}, together with the above constructions of a positive proportion of $(t,a)$ with $\gamma(\mathcal{E}) > 1$ and of a positive proportion of $(t,a)$ for which  $\gamma(\mathcal{E}) = 0$ now proves Theorem \ref{thm:propexamples}.

\section{Bounds on the number of solutions of Problem \ref{prob:Integers}.}
\label{s:bounds}

We will be concerned with finding upper bounds on the number $N(t,a,\mathcal{J},X,Y)$ of pairs $(x,y)$ of algebraic integers having the properties in Problem \ref{prob:Integers}, where $N(t,a,\mathcal{J},X,Y)$  may be infinite.   This is relevant to the following case of the hidden number problem.

Suppose we are given two pairs $(a_1,b_1)$
 and $(a_2, b_2)$ of elements of $O_F$ such that for an unknown secret $s \in O_F$, one has $b_i = sa_i + e_i$ mod $\mathcal{J}$ for a small error $e_i \in O_F$.  Then $e_2  = b_2 - s  a_2 = b_2 - (b_1 - e_1) a_1^{-1} a_2$ mod $\mathcal{J}$ .  Thus if we let $x = e_2$, $y = e_1$, $t = a_1^{-1} a_2$ mod $\mathcal{J}$
 and $a = -b_2 + b_1 a_1^{-1} a_2$ mod $\mathcal{J}$, we wil have $x + ty  + a \equiv 0$ mod $\mathcal{J}$ with $x$ and $y$ small.  Therefore
$N(t,a,\mathcal{J},X,Y)$ gives a bound on the number of secrets $s$ which can solve this case of the hidden number problem.

To bound $N(t,a,\mathcal{J},X,Y)$,
it is simplest to deal with the case $a = 0$. This gives an upper bound for arbitrary $a$ at the cost of halving the allowed sizes of archimedean conjugates.

\begin{thm}
\label{thm:bouncer}  The following is true for all $t$, $\mathcal{J}$, $X$ and $Y$.
\begin{enumerate}
\item[1.]  When $a = 0$, there are either no $(x, y)$ with the properties in  Problem \ref{prob:Integers} or  there are infinitely many such $(x,y)$.
\item[2.]   Suppose $a = 0$ and $\gamma(\mathcal{E}) < 1$
in Theorem \ref{thm:mainthm}. Then there are no $(x,y)$ satisfying the conditions in Problem \ref{prob:Integers}, i.e. $N(t,0,\mathcal{J},X,Y) = 0$. 
\item[3.]  For all $a$ one has  $N(t,a,\mathcal{J},X/2,Y/2)  \le 1 + N(t,0,\mathcal{J},X,Y)$.  Thus either $N(t,0,\mathcal{J},X,Y) = \infty$ or $N(t,0,\mathcal{J},X,Y) = 0$ and $N(t,a,\mathcal{J},X/2,Y/2) \le 1$.
\end{enumerate}
\end{thm}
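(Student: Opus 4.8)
The plan is to prove the three parts in order, using part 1 as the engine for parts 2 and 3. Throughout, a ``solution'' (and the count $N$) refers to a \emph{nonzero} pair $(x,y)$, since when $a=0$ the pair $(0,0)$ always has the properties in Problem \ref{prob:Integers}; this is what makes the dichotomy of part 1 and the assertion $N(t,0,\mathcal{J},X,Y)=0$ of part 2 meaningful. For part 1 I would exploit homogeneity: with $a=0$ the congruence $x+ty\equiv 0 \bmod \mathcal{J}\overline{\mathbb{Z}}$ is invariant under scaling both variables. If $(x,y)\neq(0,0)$ is a solution and $\zeta\in\overline{\mathbb{Z}}$ is any root of unity, then $(\zeta x,\zeta y)$ is again a pair of algebraic integers, $\zeta x+t\zeta y=\zeta(x+ty)\in\mathcal{J}\overline{\mathbb{Z}}$, and for every embedding $\lambda:\overline{\mathbb{Z}}\to\mathbb{C}$ we have $|\lambda(\zeta x)|=|\lambda(\zeta)|\,|\lambda(x)|=|\lambda(x)|\le X$ and likewise $|\lambda(\zeta y)|\le Y$, because $\lambda(\zeta)$ is a complex root of unity of absolute value $1$. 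Fixing a coordinate in which $(x,y)$ is nonzero, the map $\zeta\mapsto(\zeta x,\zeta y)$ is injective on the infinite set of roots of unity, so a single nonzero solution forces infinitely many. Hence either there are no nonzero solutions or there are infinitely many.

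For part 2 I would simply combine part 1 with Theorem \ref{thm:mainthm}(2): in the setting where $\gamma(\mathcal{E})$ is defined and $\gamma(\mathcal{E})<1$, that theorem yields only finitely many solutions to Problem \ref{prob:algint}, hence only finitely many $(x,y)$ with the properties of Problem \ref{prob:Integers}; by the dichotomy of part 1 with $a=0$, ``finitely many'' forces ``none'', i.e. $N(t,0,\mathcal{J},X,Y)=0$.

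For part 3 I would use the difference trick. If $N(t,a,\mathcal{J},X/2,Y/2)=0$ the inequality is trivial; otherwise fix a nonzero solution $(x_0,y_0)$ for the parameters $(t,a,\mathcal{J},X/2,Y/2)$. For any nonzero solution $(x,y)$ with those same parameters, the difference $(x-x_0,\,y-y_0)$ lies in $\overline{\mathbb{Z}}^2$, satisfies $(x-x_0)+t(y-y_0)=(x+ty+a)-(x_0+ty_0+a)\in\mathcal{J}\overline{\mathbb{Z}}$, and obeys $|\lambda(x-x_0)|\le|\lambda(x)|+|\lambda(x_0)|\le X/2+X/2=X$ and $|\lambda(y-y_0)|\le Y$ for every $\lambda$; so it has the properties of Problem \ref{prob:Integers} for the parameters $(t,0,\mathcal{J},X,Y)$. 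The assignment $(x,y)\mapsto(x-x_0,y-y_0)$ is injective and its only $(0,0)$-valued input is $(x_0,y_0)$ itself, so it restricts to an injection from the nonzero solutions for $(t,a,\mathcal{J},X/2,Y/2)$ other than $(x_0,y_0)$ into the nonzero solutions for $(t,0,\mathcal{J},X,Y)$. This gives $N(t,a,\mathcal{J},X/2,Y/2)-1\le N(t,0,\mathcal{J},X,Y)$, which is the claimed bound; the concluding dichotomy then follows from part 1, which says $N(t,0,\mathcal{J},X,Y)$ is either infinite or $0$, the latter case forcing $N(t,a,\mathcal{J},X/2,Y/2)\le 1$.

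I do not expect a genuine obstacle here: the result is an exercise in tracking which pairs are being counted. The two points that need care are that the root-of-unity argument in part 1 must hold for all embeddings $\lambda$ at once — it does, precisely because a root of unity has absolute value $1$ under every embedding — and that in part 3 one must excise exactly the single point $(x_0,y_0)$ in order to land the additive constant $1$ precisely rather than a weaker estimate. A minor preliminary worth recording is that part 2 tacitly takes place in the regime where the auxiliary function $g_1$ of Theorem \ref{thm:lin} exists and $\mathcal{E}$ is defined, which is already implicit in the hypothesis $\gamma(\mathcal{E})<1$.
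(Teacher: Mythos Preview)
Your proof is correct and follows the same approach as the paper: part~1 via root-of-unity scaling, part~2 by feeding Theorem~\ref{thm:mainthm}(2) into the dichotomy of part~1, and part~3 via the difference map $(x,y)\mapsto(x-x_0,y-y_0)$. Your explicit stipulation that $N$ counts only nonzero pairs is a useful clarification that the paper leaves implicit (without it, $(0,0)$ is always a solution when $a=0$, so part~1 and the equality $N=0$ in part~2 would be literally false); the paper's own proof relies on the same reading without stating it.
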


\begin{proof}  For part (1), observe that if $(x,y) \in \overline{\mathbb{Z}} \times \overline{\mathbb{Z}}$ has the properties in Problem \ref{prob:Integers} when $a = 0$, then
so does $(\zeta x, \zeta y)$ for any root of unity $\zeta$.  To prove (2), note that Theorem \ref{thm:mainthm} shows $N(t,0,\mathcal{J},X,Y)$ is finite if $\gamma(\mathcal{E}) < 1$. Then part (1) forces  $N(t,0,\mathcal{J},X,Y) = 0$.  Part (3) follows from the fact that the difference of two solutions $(x,y)$ and $(x',y')$ to Problem \ref{prob:Integers} for given $t$, $a$, $\mathcal{J}$, $X/2$ and $Y/2$ is a solution $(x'',y'') = (x' - x, y' - y)$ to 
Problem \ref{prob:Integers} for $t$, $0$, $\mathcal{J}$, $X$ and $Y$.
\end{proof}

\begin{rem} The proof of parts (1) and (2)  illustrates the advantages of working over the ring $\overline{\mathbb{Z}}$ of all algebraic integers, rather than in the integers of a particular number field.  This makes it possible to promote a finiteness result coming from capacity theory to a proof that a homogenous linear congruence has no small solutions at all. 
\end{rem}

We illustrate this result with a concrete application to the hidden number problem. Suppose as in \S \ref{s:hiddennumber} that we  are given an ideal $\mathcal{J}$
of the integers $O_F$ of a number field $F$ and a real number $X$.  For a secret integer $s \in O_F$ we are given samples $(c_i,d_i) \in O_F \times O_F$ for $i = 0,1$ for which $c_0$ is prime to $\mathcal{J}$ and the following is true.
There is an (unknown) element $x_i \in O_F$ such that 
\begin{equation}
\label{eq:congruences}
c_i s - d_i  \equiv x_i \quad \mathrm{mod}\quad \mathcal{J}
\end{equation}
and $|\lambda(x_i)| \le X/2$ for all embeddings $\lambda:F \to \mathbb{C}$.  We would like to determine $s$ mod $\mathcal{J}$ from this information.  Theorem \ref{thm:bouncer} leads in the following way to a computable criterion for their to exist at most one solution $s$ mod $\mathcal{J}$.

As in \S \ref{s:hiddennumber}, we find $c'_0 \in O_F$ such that $c_0 c'_0 \equiv 1 $ mod $\mathcal{J}$.   Then (\ref{eq:congruences}) for $i = 0$
gives
$$s \equiv c'_0 (x_0 + d_0) \quad \mathrm{mod}\quad \mathcal{J}.$$
Substituting this into (\ref{eq:congruences}) when $i = 1$ gives 
\begin{equation}
\label{eq:urky}
x_1 + t_i x_0 + a_1 \equiv 0 \quad \mathrm{mod}\quad \mathcal{J}
\end{equation}
where $t_1 = -c_1 c'_0$ and $a_1 = d_1 - d+ c_1 c'_0 d_0$. Thus the problem of finding all $s$ mod $\mathcal{J}$ satsifying the above conditions
is converted to finding all solutions $(x_0,x_1) \in O_F\times O_F$ of the congruence (\ref{eq:urky}) such that $|\lambda(x_i)| < X/2$ for $i = 0, 1$ and all embeddings
$\lambda : F \to \mathbb{C}$.  

We bound the number of  solutions $s$ mod $\mathcal{J}$ in the following way.   Using lattice basis reduction, find a polynomial
$b_1 x + b_2 y + b_3 \in \mathcal{J}^{-1} O_F[x,y]$ with the properties in Theorem \ref{thm:lin} for $Y = X$, $t = -c_1 c'_0$ and $a = 0$.  Calculate the capacity $\gamma(\mathcal{E})$ of the adelic set $\mathcal{E}$ associated to this adelic set in Definition \ref{def:definitions}, using Lemma \ref{lem:answers}
and Theorem \ref{thm:explicit}.  If $\gamma(\mathcal{E}) < 1$, then parts (2) and (3) of Theorem \ref{thm:bouncer} show $N(t,a,\mathcal{J},X/2,X/2) \le 1$.
Thus there is at most one pair $(x_0,x_1)$ as above, and at most one integer $s$ mod $\mathcal{J}$ which solves the above case of the hidden number problem.

We conclude this paper with another example illustrating Theorem \ref{thm:bouncer}. Suppose $a = 0$, $t \in O_F$ and that $\mathcal{J} = O_F \alpha$ is a non-zero principal ideal of $O_F$.  Suppose $(x_0,y_0) \in O_F$ satisfy the congruence
\begin{equation}
\label{eq:nicerhyp}
x_0 + t y_0 \equiv 0 \quad \mathrm{mod}\quad \mathcal{J}
\end{equation}
and that 
\begin{equation}
\label{eq:nicerbound}
|x_0 \cdot y_0|_v \le |\alpha|_v/2 \quad \mathrm{for \ all}\quad v \in \Minf.
\end{equation}
Suppose as before that $t$ is prime to $\mathcal{J} = O_F \alpha$, and that $x_0$, $y_0$ and $\alpha$ are pairwise relatively prime, in the sense that the ideal generated by any two of them is $O_F$.  

\begin{thm}
\label{thm:surprise} With the above hypotheses, there are no non-zero pairs $(x,y) \in \overline{\mathbb{Z}} \times \overline{\mathbb{Z}}$ with the following properties:
\begin{enumerate}
\item[1.] $x + ty \equiv 0$ mod $ \mathcal{J} \cdot \overline{\mathbb{Z}}$, and
\item[2.]  For all embeddings $\lambda:\overline{\mathbb{Z}} \to \mathbb{C}$, one has
$$|\lambda(x)| \le |\lambda(x_0)|\quad \mathrm{and}\quad |\lambda(y)| \le |\lambda(y_0)|$$
with at least one of these inequalities being strict for at least one $\lambda$.  
\end{enumerate}
\end{thm}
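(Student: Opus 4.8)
The plan is to reduce the whole statement to a single divisibility-plus-size property of the algebraic integer
\[
w \;=\; x\,y_0 - x_0\,y \;=\; y_0(x+ty) - y(x_0+ty_0).
\]
Writing $\mathcal J = \alpha O_F$, the second expression for $w$ shows $w \in \alpha\,\overline{\mathbb Z}$: indeed $x+ty \in \alpha\,\overline{\mathbb Z}$ by condition~(1), and $x_0+ty_0 \in \mathcal J \subseteq \alpha\,\overline{\mathbb Z}$ by (\ref{eq:nicerhyp}). So $w/\alpha \in \overline{\mathbb Z}$. I would then bound $w/\alpha$ at every archimedean place: for each embedding $\lambda:\overline{\mathbb Z}\to\mathbb C$,
\[
|\lambda(w/\alpha)| \;\le\; \frac{|\lambda(x)|\,|\lambda(y_0)| + |\lambda(x_0)|\,|\lambda(y)|}{|\lambda(\alpha)|} \;\le\; \frac{2\,|\lambda(x_0 y_0)|}{|\lambda(\alpha)|} \;\le\; 1,
\]
using the triangle inequality, condition~(2), and the size bound (\ref{eq:nicerbound}). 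Since a non-zero algebraic integer all of whose conjugates lie in the closed unit disk has norm to $\mathbb Q$ a rational integer of absolute value at most $1$, hence all of its conjugates of absolute value exactly $1$, we are in one of two cases: either $w = 0$, or $w = \alpha\zeta$ for an algebraic integer $\zeta$ with $|\lambda(\zeta)| = 1$ for every $\lambda$. (Here I would also record the preliminary fact that $x_0 \ne 0 \ne y_0$, which holds because $t$ is prime to $\mathcal J$: e.g.\ $x_0 = 0$ would give $ty_0 \in \mathcal J$, hence $y_0 \in (\alpha)$, contradicting that $y_0$ and $\alpha$ are coprime; the case $\mathcal J = O_F$ is trivial.) It then remains to contradict condition~(2) in each of the two cases.

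In the case $w = 0$, i.e.\ $x y_0 = x_0 y$, I would use that $x_0$ and $y_0$ are \emph{coprime} in $O_F$: choosing $u, v \in O_F$ with $u x_0 + v y_0 = 1$ and setting $\rho = u x + v y \in \overline{\mathbb Z}$, the relation $x y_0 = x_0 y$ gives $\rho x_0 = x$ and $\rho y_0 = y$. Condition~(2) then forces $|\lambda(\rho)| \le 1$ for all $\lambda$; since $\rho = 0$ would give $(x,y) = (0,0)$, we have $\rho \ne 0$, so the norm bound forces $|\lambda(\rho)| = 1$ for all $\lambda$, whence $|\lambda(x)| = |\lambda(x_0)|$ and $|\lambda(y)| = |\lambda(y_0)|$ for all $\lambda$ --- contradicting the strictness clause in~(2). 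In the case $w = \alpha\zeta$ with $|\lambda(\zeta)| = 1$ everywhere, I would revisit the displayed chain of inequalities, which now reads
\[
|\lambda(\alpha)| = |\lambda(w)| \le |\lambda(x)|\,|\lambda(y_0)| + |\lambda(x_0)|\,|\lambda(y)| \le 2\,|\lambda(x_0 y_0)| \le |\lambda(\alpha)|,
\]
and hence is an equality throughout. Since $x_0, y_0 \ne 0$, equality in the second step together with $|\lambda(x)| \le |\lambda(x_0)|$ and $|\lambda(y)| \le |\lambda(y_0)|$ forces $|\lambda(x)| = |\lambda(x_0)|$ and $|\lambda(y)| = |\lambda(y_0)|$ for every $\lambda$ --- again contradicting~(2). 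This completes the proof.

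I do not expect a serious obstacle here: the content is the observation that $x y_0 - x_0 y$ is automatically divisible by $\alpha$ while being small at every archimedean place, which is exactly the ``homogeneous rigidity'' underlying Theorem~\ref{thm:bouncer}; the argument is self-contained and bypasses the capacity computation of Theorem~\ref{thm:mainthm}. The one step needing care is the case $w = 0$, and it is also the reason one hypothesizes pairwise coprimality rather than merely $x_0 y_0 \ne 0$: there one must produce $\rho$ as a genuine element of $\overline{\mathbb Z}$, not just a ratio $x/x_0 \in \overline{\mathbb Q}$, and for this it is essential that $x_0$ and $y_0$ generate the unit ideal of $O_F$. Everything else is triangle inequalities and the elementary norm bound for non-zero algebraic integers.
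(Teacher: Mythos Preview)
Your proof is correct, and it takes a genuinely different route from the paper after the common first step.

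Both arguments begin identically: form the auxiliary quantity $w = xy_0 - x_0 y$ (the paper writes it as $\alpha(b_1 x + b_2 y)$ with $b_1 = y_0/\alpha$, $b_2 = -x_0/\alpha$), observe that $w/\alpha \in \overline{\mathbb Z}$, and bound all its archimedean conjugates by $1$. The paper then uses the strictness in condition~(2) immediately to force $w = 0$; you instead split into the two cases $w = 0$ and ``all conjugates of $w/\alpha$ on the unit circle,'' disposing of the second case by reading equality back through the chain of inequalities. These are equivalent ways of reaching $w = 0$.

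The real divergence is what happens after $w = 0$. The paper stays within its capacity-theoretic framework: it treats $b_1 x + b_2 y$ as an auxiliary linear function in the sense of Theorem~\ref{thm:lin}, builds the associated adelic set $\mathcal E$ (with the archimedean radii slightly shrunk at one place using the strict inequality), computes $\gamma(\mathcal E) = \prod_{v \in \Minf} r_v^{[F_v:\mathbb R]} < 1$ via the product formula, and then invokes Theorem~\ref{thm:mainthm} together with the root-of-unity trick of Theorem~\ref{thm:bouncer} to reach a contradiction. Your argument is entirely elementary: from $xy_0 = x_0 y$ and a B\'ezout relation $ux_0 + vy_0 = 1$ you extract $\rho = ux + vy \in \overline{\mathbb Z}$ with $(x,y) = \rho(x_0,y_0)$, and a second norm argument on $\rho$ finishes. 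This bypasses the capacity computation and Theorems~\ref{thm:mainthm} and~\ref{thm:bouncer} altogether, and it makes transparent exactly where the pairwise-coprimality hypothesis on $x_0,y_0$ is used. The paper's route, on the other hand, illustrates the general machinery of \S3 in action and shows that Theorem~\ref{thm:surprise} really is an instance of the $\gamma(\mathcal E) < 1$ phenomenon, which is the thematic point of \S\ref{s:bounds}.
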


Thus in a small non-zero solution of the homogenous congruence resulting from setting $a = 0$ prevents the existence of non-trivial solutions with smaller archimedean absolute values.  Concerning the relation of the inequality (\ref{eq:nicerbound}) to Problem \ref{prob:Integers}, note
that if $0 < \alpha \in \mathbb{Z}$, then  (\ref{eq:nicerbound}) follows from requiring $|x_0|_v \le X$ and $|y_0| \le Y$ for some real $X, Y$ such that $|XY| \le \alpha/2$.  
\medbreak

\noindent {\bf Proof of Theorem  \ref{thm:surprise}}
\medbreak
 Define a polynomial in the variables $x$ and $y$ by 
\begin{equation}
\label{eq:bdef}
b_1 x + b_2 y = (y_0 x - x_0 y)/\alpha.
\end{equation}
Hypothesis (\ref{eq:nicerhyp}) shows $x_0 + t y_0 = \alpha z_0$ for some $z_0 \in O_F$.   Therefore
\begin{equation}
\label{eq:congruential}
b_1 x + b_2 y = y_0(x + ty)/\alpha - z_0 y \in \mathcal{J}^{-1} \cdot (x + ty) + O_F \cdot y.
\end{equation}
We now substitute for the variables $x$ and $y$ a pair of elements of $\overline{\mathbb{Z}}$ satisfying conditions (1) and (2) of Theorem \ref{thm:surprise}.  
The inequalities in condition (2) of Theorem \ref{thm:surprise} show that for each embedding $\lambda:\overline{\mathbb{Z}} \to \mathbb{C}$ we have
\begin{equation}
\label{eq:upper}
|\lambda(b_1 x + b_2 y)| = |\lambda(\frac{y_0 x - x_0 y}{\alpha})| \le |\frac{\lambda(y_0) \lambda(x)}{\alpha}| + |\frac{\lambda(x_0) \lambda(y)}{\alpha}|.
\end{equation}
Now (\ref{eq:nicerbound}) gives $$|\frac{\lambda(y_0)}{\alpha}| \le \frac{1}{2|\lambda(x_0)|} \quad \mathrm{and}\quad \frac{|\lambda(x_0)|}{\alpha} \le \frac{1}{2|\lambda(y_0)|}.$$
Substituting
this into (\ref{eq:upper}) gives
$$|\lambda(b_1 x + b_2 y)| \le \frac{|\lambda(x)|}{2|\lambda(x_0)|} + \frac{|\lambda(y)|}{2|\lambda(y_0)|}$$
Hypothesis (2) of Theorem \ref{thm:surprise} now implies $|\lambda(b_1 x + b_2 y)| \le 1$ with strict equality for at least one $\lambda$.  Since  $b_1 x + b_2 y$ is an algebraic integer, we conclude that
\begin{equation}
\label{eq:zeroed}
b_1 x + b_2 y  = 0 \quad \mathrm{when} \quad b_1 = y_0/\alpha \quad \mathrm{and} \quad b_2 = - x_0/\alpha.
\end{equation}
We enlarge $F$ so that it includes $x$ and $y$. There is then an archimedean place $v_\infty$ of $F$ at which either $|x|_{v_\infty} <  |x_0|_{v_\infty}$ or $|y|_{v_\infty} < |y_0|_{v_\infty}$ .  
For simplicity we will suppose that $r_{v_\infty} = |y|_{v_\infty}|/ |y_0|_{v_\infty} < 1$, the other case being similar. Define $r_v = 1$ if $v_\infty \ne v \in \Minf$.  

We now define an adelic set $\mathcal{E} = \prod_{v \in M_F} E_v$ associated to $b_1 x + b_2 y$ in the following way.  Set $b_3 = 0$ in Definition \ref{def:definitions}.
If $v \in \Mfin$ is finite, let $E_v$ be as in part (i) of Definition (\ref{def:definitions}). If $v \in \Minf$ is an infinite place, define 
\begin{equation}
\label{eq:archdef}
E_v = \{ y \in \overline{F}_v:  |y|_v \le r_v |y_0|_v \quad \mathrm{and} \quad  |x|_v = |b_2 y/b_1|_v = |-x_0 y / y_0|_v \le  |x_0|_v\} = \{ y \in \overline{F}_v:  |y|_v \le r_v |y_0|_v\}
\end{equation}
where we have used $r_v \le 1$.

 As in the proof of part (2) of Theorem \ref{thm:mainthm}, if $\gamma(\mathcal{E}) < 1$,
then there will be only  finitely many pairs $(x,y) \in \overline{\mathbb{Z}} \times \overline{\mathbb{Z}}$ satisfying the conditions in Theorem \ref{thm:surprise} and for which
$|y|_v \le r_v |y_0|_v$.   Then Theorem \ref{thm:bouncer} shows that in fact there are no such pairs, contradicting the hypothesis above that such a pair exists.  We conclude that to prove Theorem \ref{thm:surprise} if will suffice to show $\gamma(\mathcal{E}) < 1$.

We first need to describe explicitly the set $E_v$ when $v \in \Mfin$. From Definition \ref{def:definitions} and (\ref{eq:bdef}) we see that $E_v$ is the set of
$y \in \overline{F}_v$ satisfying 
\begin{enumerate}
\item[i.]  $|y|_v \le 1$
\item[ii.] $|-x_0 y /y_0|_v \le 1$ and 
\item[iii.] $|x_0y /y_0 +ty|_v \le |\mathcal{J}|_v = |\alpha|_v$.
\end{enumerate}
  Let us first show (i) and (ii) imply (iii).  If $|\alpha|_v  = 1$, this is clear from $t \in O_F$.  Suppose now that $|\alpha|_v  < 1$.  We know $y_0$ is prime to $\alpha$, so $|y_0|_v = 1$.  We  have  $(y_0,-x_0) \equiv y_0(1, t)$ mod $\mathcal{J}$ by multiplying the first equality in (\ref{eq:congruential}) by $\alpha$, so $|x_0 + t y_0|_v \le |\mathcal{J}|_v$.  Thus $$|(x_0/y_0 + t) y|_v = |y_0|_v^{-1} \cdot |x_0 + t y_0|_v \cdot |y|_v \le |\mathcal{J}|_v \quad 
\mathrm{if}\quad  |y|_v \le 1.$$
Therefore (iii) is implied by (i) when $|\alpha|_v$.  Thus $E_v$
is the set of $y \in \overline{F}_v$  satisfying (i) and (ii).  

Recall now that $y_0$ and $-x_0$ are coprime elements of $O_F$ by assumption.  Thus if $|y_0|_v < 1$ then $|-x_0|_v = 1$, and otherwise $|y_0|_v = 1$.  If $|y_0|_v = 1$,
then (i) implies $|-x_0 y /y_0|_v = |-x_0 y |_v \le 1$, so (ii) holds.  If $|y_0|_v < 1$, then $|-x_0|_v = 1$ so (ii) is equivalent to 
$|y |_v \le |y_0/(-x_0)|_v = |y_0|_v < 1$.  Hence condition (ii) implies (i) if $|y_0|_v < 1$.  We thus
find that for all finite $v$, $E_v$ is a $v$-adic disc of radius $r_v = |y_0|_v$ around  $0 \in F_v$.  The local capacity of $E_v$ is therefore
\begin{equation}
\label{eq:localfinite}
\gamma_v(E_v) = |y_0|_v^{[F_v:\mathbb{Q}_{p(v)}]} = ||y_0||_v \quad \mathrm{for} \quad v \in \Mfin
\end{equation}
where $p(v)$ is the residue characteristic of $v$ and $|| \ ||_v$ is the normalized valuation at $v$.  

We now consider archimedean $v \in M_{F,\inf}$.  From (\ref{eq:archdef}) we see that $E_v$ is the closed disc around $0$ in $\overline{F}_v = \mathbb{C}$
of radius $r_v  |y_0|_v$.  Thus the local capacity is
\begin{equation}
\label{eq:localinf}
\gamma_v(E_v) = (r_v |y_0|_v)^{[F_v:\mathbb{R}]} = r_v^{[F_v:\mathbb{R}]} ||y_0||_v \quad \mathrm{for} \quad v \in \Minf.
\end{equation}

Now (\ref{eq:localfinite}) and (\ref{eq:localinf}) together with the product formula give the global capacity of $\mathcal{E}$ as
$$\gamma(\mathcal{E}) = \prod_v \gamma_v(E_v) = \prod_{v \in \Minf} r_v^{[F_v:\mathbb{R}]} \cdot  \prod_{v\in M_F}  ||y_0||_v  =  \prod_{v \in \Minf} r_v^{[F_v:\mathbb{R}]}  < 1$$
which completes the proof of Theorem \ref{thm:surprise}.

\vfill
\eject

\bibliographystyle{plain} 
\bibliography{abbrev0,crypto,other-refs}

\end{document}